\newcommand{\CA}{\mbox{$\mathsf{CA}$}}
\newcommand{\CAN}{\mbox{$\mathsf{CAN}$}}
\theoremstyle{plain}
\newtheorem{thm}{\protect\theoremname}
  \theoremstyle{plain}
  \newtheorem{lem}[thm]{\protect\lemmaname}
  \providecommand{\lemmaname}{Lemma}
\providecommand{\theoremname}{Theorem}
\begin{document}

\title{Upper bounds on the size of covering arrays }

\author{Kaushik Sarkar and Charles J. Colbourn\\
School of Computing, Informatics, and Decision Systems Engineering\\
Arizona State University, PO Box 878809\\
Tempe, Arizona, 85287-8809, U.S.A.\\
ksarkar1, colbourn@asu.edu}

\maketitle
\begin{abstract}
Covering arrays find important application in software and hardware interaction testing. 
For practical applications it is useful to determine or bound the minimum number of rows, $\CAN(t,k,v)$, in a covering array for given values of the parameters $t,k$ and $v$. 
Asymptotic upper bounds for $\CAN(t,k,v)$ have earlier been established using the Stein-Lovász-Johnson strategy and the Lovász local lemma. 
A series of improvements on these bounds is developed in this paper.  
First an estimate for the discrete Stein-Lovász-Johnson bound is derived. 
Then using alteration,  the Stein-Lovász-Johnson bound is improved upon, leading to a two-stage  construction algorithm. 
Bounds from the Lovász local lemma are improved upon in a different manner, by examining group actions on the set of symbols.
Two asymptotic upper bounds on $\CAN(t,k,v)$ are established that are tighter than the known bounds. 
A two-stage  bound is derived that employs the Lovász local lemma and the conditional Lovász local lemma distribution. 
\end{abstract}

\section{Introduction}

Let $N,\, t,\, k,$ and $v$ be positive integers with $k\ge t\ge2$ and $v\ge2$. 
A \emph{covering array} $\CA(N;t,k,v)$ is an $N\times k$ array $A$ in which each entry is from a $v$-ary alphabet $\Sigma$, and for every $N\times t$ sub-array $B$ of $A$ and every  ${\bf x} \in \Sigma^{t},$ there is a  row of $B$ that equals $\bf x$. 

When $k$ is a  positive integer,  $[k]$  denotes the set $\{1,\ldots,k\}$. 
A \emph{$t$-way interaction}  is $\{(c_{i},a_{i})\,:\,1\le i\le t,\, c_{i}\in[k],\, c_{i}\neq c_{j}\,\text{for }i\neq j,\,\text{and }a_{i}\in\Sigma\}$.
So an interaction is an assignment of values from  $\Sigma$ to  $t$  of the $k$  columns. 
An $N\times k$ array $A$ \emph{covers} the interaction $\iota = \{(c_{i},a_{i})\,:\,1\le i\le t,\, c_{i}\in[k],\, c_{i}\neq c_{j}\,\text{for }i\neq j,\,\text{and }a_{i}\in\Sigma\}$ if there is a row $r$ in $A$ such that $A(r,c_{i})=a_{i}$ for $1\le i\le t$. When there is no such row in
$A$,  $\iota$ is \emph{not} covered in $A$. 
Hence a $\CA(N;t,k,v)$ covers all the $t$-way interactions involving $k$ columns each having $v$ values. 

Covering arrays are used extensively for interaction testing in complex engineered systems. 
In that setting, the $k$ columns represent {\em factors} that may affect performance; the $v$ values are the valid {\em levels} of the factors; each of the $N$ rows forms a {\em test} of a test suite; and $t$ is the {\em strength} of coverage of interactions among the factors.
Real-world software or hardware system can consist of hundreds of components. 
While unit testing can reveal faulty selections for particular components, correct components may nevertheless interact to cause a fault.
To ensure that all  possible combinations of options of $t$ components function together correctly, one needs to examine all possible $t$-way interactions. 
When the number of components is $k$, and the number of different options available for each component is  $v$,  the $N$ tests  of a $\CA(N;t,k,v)$ collectively test all  $t$-way interactions. 
For this reason, covering arrays have been used in combinatorial interaction testing in varied fields like software and hardware engineering, design of composite materials, and biological networks \cite{CAWSE,Kuhnbook,KUHN2,RonC,SerB}. 

The cost of testing is directly related to the number of test cases, so one is interested in covering arrays with the fewest rows.
The smallest value of $N$ for which $\CA(N;t,k,v)$ exists is denoted by $\CAN(t,k,v)$. 
Efforts to determine or bound $\CAN(t,k,v)$ have been extensive; see \cite{sicily,croatia,Kuhnbook,NieL-CS} for example. 
Naturally one would prefer to determine $\CAN(t,k,v)$ exactly.  
Katona  and Kleitman \cite{katona}   and Spencer \cite{kleitman} independently showed that for $t=v=2$,
the minimum number of rows $N$ in a $\CA(N;2,k,2)$ is the smallest $N$ for which 
$k\leq{N-1 \choose \lceil\frac{N}{2}\rceil}$.
Since that time, the exact value of $\CAN(t,k,v)$ as a function of $k$ has not been determined for any other cases with $t \geq 2$ and $v \geq 2$.

In light of this, the asymptotic determination of $\CAN(t,k,v)$ has been of substantial interest.  
For fixed $t$ and $v$, it is well-known that $\CAN(t,k,v)$ is $\Theta(\log k)$; the lower bound can be established by observing that all columns are distinct, and the upper bound is a simple probabilistic argument.
When $t=2$ and  $v\ge2$. Gargano et al. \cite{GKV2} establish the much more precise statement that  $\CAN(t,k,v)=\frac{v}{2}\log k\left\{ 1+o(1)\right\} $. 
However, when $t > 2$, even the coefficient of the highest order term is not known precisely.
One of our main results improves on the  best known asymptotic upper bound on $\CAN(t,k,v)$. 

Returning to the testing application, the methods used to obtain asymptotic bounds have had little impact to date, for two main reasons. 
First, other methods typically provide smaller arrays than are guaranteed by the asymptotic methods.  
Secondly, even when the asymptotic methods yield a better bound, it may be non-constructive or provide no efficient construction method.  
To understand these, consider the current tables of upper bounds for covering array numbers \cite{catables}.
When $t=6$ and $v=3$, for example, direct constructions \cite{WC} determine the best known upper bounds on $\CAN(6,k,3)$ when $k \leq 14$; greedy algorithms \cite{ColCECA,ipogf}  determine bounds for $15 \leq k \leq 51$; and recursive methods \cite{CZquilt} determine bounds for $k \geq 52$. 
Each of these provides an efficient method of producing the covering array for use in testing, yet for $k \geq 53$ the sizes of the arrays so produced are {\sl larger} than one guaranteed to exist by probabilistic arguments.  
Evidently efficient constructions to implement the asymptotic methods show much promise in producing covering arrays for testing large systems.  
Our second main contribution is to demonstrate that the improvements in the asymptotic bound form the basis of efficient construction algorithms.  

Next we introduce some notation used throughout the paper.
Let $\mathscr{I}_{t}$ be the set of all $t$-way interactions on $k$ factors with $v$ levels, and let $\mathscr{C}_{t}$ be the set of all subsets of size $t$ of $[k]$, i.e. $\mathscr{C}_{t}={[k] \choose t}$.
We represent each $t$-subset of $[k]$ as an increasing sequence of $t$ values from $[k]$, so that each $t$-subset has a unique representation. 
We often abuse the notation  and use this sequence representation for the subset. 
Define  $c:\mathscr{I}_{t}\rightarrow\mathscr{C}_{t}$ as follows: For  $\iota\in\mathscr{I}_{t}$, $c(\iota)=(c_{1},\ldots,c_{t})$ where $(c_{i},a_{i})\in\iota$ for some $a_{i}\in\Sigma$, and $c_{i}<c_{j}$ for $1\le i<j\le t$.
We  use $c(\iota)_{i}$ to denote the $i$th element of $c(\iota)$, i.e. $c_{i}$. 
Similarly, define  $s:\mathscr{I}_{t}\rightarrow\Sigma^{t}$ as follows: For an interaction $\iota\in\mathscr{I}_{t}$, define $s(\iota)=(a_{1},\ldots,a_{t})$ where $(c(\iota)_{i},a_{i})\in\iota$
for $1\le i\le t$. 
We  use $s(\iota)_{i}$ to denote the $i$th element of the $t$-tuple $s(\iota)$. 
A bijection between $\mathscr{I}_{t}$ and $\mathscr{C}_{t}\times\Sigma^{t}$ maps $\iota\rightarrow(c(\iota),s(\iota))$. 
Therefore, the interaction $\iota$ can be  described by the ordered pair $(c(\iota),s(\iota))$.

The rest of the paper is organized as follows. 
Section \ref{sec:slj} introduces the Stein-Lovász-Johnson bound on $\CAN(t,k,v)$.
We develop a discrete version of Stein-Lovász-Johnson bound and provide a useful estimate of this bound. 
In Section \ref{sub:Partial-array} we present our first result --- an upper bound on $\CAN(t,k,v)$. 
The statement and proof of the bound are followed by a discussion of its constructive nature. 
Section \ref{sec:Partial-dependence} first discusses the partial dependence structure of the interactions, and derives the Godbole--Skipper--Sunley (GSS) bound. 
Section \ref{sub:Group-action} presents the key improvement on bounds for covering array numbers. 
It applies group actions  to covering arrays  to improve the GSS bound. 
We combine the ideas of Section \ref{sub:Partial-array} and \ref{sub:Group-action} to obtain yet another upper bound on $\CAN(t,k,v)$ in Section \ref{sec:Partial-array-LLL}. 
In Section \ref{sec:Conclusion} we discuss the relative merits of the different bounds ontained, and present  an open problem.

\section{The Stein-Lovász-Johnson bound}\label{sec:slj}

Specializing the method of Stein \cite{Stein74}, Lovász \cite{Lovasz75}, and Johnson \cite{Johnson74} to the case of covering arrays one gets an upper bound on $\CAN(t,k,v)$ in the general case. 
Because the ideas used are essential for the rest of the paper, we provide a  proof of this known result.
\begin{thm}
\label{thm:slj}{\rm \cite{Johnson74,Lovasz75,Stein74}}(Stein-Lovász-Johnson
(SLJ) bound): Let $t,\, k,\, v$ be integers with $k\ge t\ge2$, and
$v\ge2$. Then as $k\rightarrow\infty$,
\[
\CAN(t,k,v)\le\frac{t}{\log\frac{v^{t}}{v^{t}-1}} \log k (1+\mbox{\rm o}(1))
\]
\end{thm}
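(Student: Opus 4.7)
The plan is a standard greedy (averaging) argument that builds the array one row at a time. The central observation is that if $r\in\Sigma^k$ is drawn uniformly at random and $\iota\in\mathscr{I}_t$ is any fixed $t$-way interaction, then the probability that $r$ covers $\iota$ is exactly $v^{-t}$: only the $t$ entries of $r$ in the columns $c(\iota)$ are relevant, and each of the $v^t$ equally likely patterns there is assumed with equal probability.

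First I would set up the greedy construction. Let $U_0=\mathscr{I}_t$, so $|U_0|=\binom{k}{t}v^t$. At step $i+1$, pick a row $r_{i+1}\in\Sigma^k$ that covers as many interactions of $U_i$ as possible, and let $U_{i+1}$ be the set of interactions still uncovered. By linearity of expectation, a uniformly random row covers on average $|U_i|/v^t$ interactions of $U_i$, so there must exist a row $r_{i+1}$ covering at least this many. This yields the recursion $|U_{i+1}|\le |U_i|(1-v^{-t})$, and by induction
\[
|U_N|\ \le\ \binom{k}{t}v^t\,(1-v^{-t})^N.
\]

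Next I would choose $N$ so that the right-hand side is strictly less than $1$; since $|U_N|$ is a non-negative integer, this forces $U_N=\emptyset$, and the $N$ selected rows form a $\CA(N;t,k,v)$. Taking logarithms, it suffices to pick
\[
N\ \ge\ \frac{\log\!\bigl(\binom{k}{t}v^t\bigr)}{\log\frac{v^t}{v^t-1}}.
\]
Using $\log\binom{k}{t}=t\log k-\log(t!)+O(1/k)$ together with $\log(v^t)=t\log v$, the numerator equals $t\log k\,(1+o(1))$ as $k\to\infty$ with $t$ and $v$ fixed, yielding the claimed asymptotic bound.

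There is no real ``hard step'' here; the proof is essentially routine. The only thing that deserves care is checking that the $o(1)$ in the statement correctly absorbs every contribution that is $O(1)$ in $k$, namely the $-\log(t!)$ and $t\log v$ terms in the numerator and the $+1$ arising from rounding $N$ up to an integer, each of which is negligible compared with the leading $t\log k$.
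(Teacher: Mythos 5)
Your proof is correct, but it takes a different route from the paper's. The paper proves Theorem \ref{thm:slj} with a one-shot first-moment argument: choose the entire $N\times k$ array uniformly at random, note that the expected number of uncovered interactions is ${k \choose t}v^{t}\left(1-\frac{1}{v^{t}}\right)^{N}$, and force this below $1$ so that some outcome has no uncovered interaction. You instead run the greedy Stein--Lov\'asz--Johnson iteration: select rows one at a time, use the averaging argument to find a row covering at least a $v^{-t}$ fraction of the currently uncovered interactions, and unwind the recursion $|U_{i+1}|\le|U_i|(1-v^{-t})$ to arrive at exactly the same inequality ${k \choose t}v^{t}(1-v^{-t})^{N}<1$. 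The two arguments are numerically identical from that point on, so your asymptotic simplification (including the remark that $-\log(t!)$, $t\log v$, and the integer rounding are all absorbed into the $o(1)$) matches the paper's. What your route buys is constructiveness: it is precisely the greedy scheme the paper introduces immediately after its proof as the basis for the density algorithm and the discrete-SLJ refinement, where the observation that each step after the first covers \emph{strictly more} than the expected number leads to a genuinely better bound. What the paper's route buys is brevity, and it is the natural starting point for the alteration and Lov\'asz-local-lemma variants developed later, which also work with a single random array rather than a row-by-row process. One microscopic quibble: you should require the strict inequality $N>\log\bigl({k\choose t}v^{t}\bigr)/\log\frac{v^{t}}{v^{t}-1}$ (or round up) so that the bound on $|U_N|$ is strictly below $1$; with equality you only get $|U_N|\le 1$. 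This is of course swallowed by the $o(1)$.
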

\begin{proof}
Let $A$ be an $N\times k$ array in which each entry is chosen independently and uniformly at random from an alphabet $\Sigma$ of size $v$.
The probability that a specific interaction of strength $t$ is not  covered in $A$ is $\left(1-\frac{1}{v^{t}}\right)^{N}$. 
By the linearity of expectations, the expected number of uncovered interactions in $A$ is ${k \choose t}v^{t}\left(1-\frac{1}{v^{t}}\right)^{N}$.
If this expectation is less than $1$, because the number of uncovered interactions is an integer,  there is an array with $N$ rows that covers all the interactions.
Solving ${k \choose t}v^{t}\left(1-\frac{1}{v^{t}}\right)^{N}<1$,
we get $\CAN(t,k,v)\le\frac{\log{k \choose t}+t\log v}{\log\left(\frac{v^{t}}{v^{t}-1}\right)}$.
Simplifying further,

\begin{eqnarray*}
\CAN(t,k,v) & \le & \frac{\log{k \choose t}+t \log v}{\log\left(\frac{v^{t}}{v^{t}-1}\right)}\\
 & \le & \frac{t \log\left(\frac{ke}{t}\right)+t \log v}{\log\left(\frac{v^{t}}{v^{t}-1}\right)}\\
 & = & \frac{t \log k}{\log\left(\frac{v^{t}}{v^{t}-1}\right)}\left(1+\frac{1}{\log k}-\frac{\log t}{\log k}+\frac{\log v}{\log k}\right)\\
 & = & \frac{t}{\log\frac{v^{t}}{v^{t}-1}}\log k (1+\mbox{\rm o}(1))
\end{eqnarray*}

This completes the proof.
\end{proof}

Rather than choosing the $N$ rows at random, we can build the covering array one row at a time.
To select a row, compute the expected number of uncovered interactions that remain when we choose the next row uniformly at random from $\Sigma^{k}$.
There must be a row whose selection leaves at most that expected number of interactions uncovered. 
Indeed except when the first row is selected, some row must leave a number that is strictly less than the expectation, because previously selected rows cover no interaction that is not yet covered.
Add such a row to the covering array and repeat  until all the interactions are covered. 
The number of  rows employed by this method yields an upper bound on $\CAN(t,k,v)$. 
If at each stage the row selected left uncovered precisely the expected number of uncovered interactions, we recover Theorem \ref{thm:slj}.  
However, after each row selection the number of uncovered interactions must be an integer no larger than the expected number,  improving on the basic SLJ bound.
The better upper bound is the   \emph{discrete Stein-Lovász-Johnson} (discrete-SLJ) bound. 

A row that leaves no more than the expected number uncovered can be computed efficiently when $t$ and $v$ are fixed, so the discrete-SLJ bound can be efficiently derandomized; this is the basis of the {\em density algorithm} \cite{DDA,nDDA}.
The density algorithm works quite well in practice, providing the smallest known covering arrays in many cases \cite{catables}. 
Although Theorem \ref{thm:slj} provides an easily computed upper bound on the array sizes produced by the density algorithm, it is a very loose  bound.

We analyze the discrete Stein-Lovász-Johnson bound in order to establish a better estimate. 
\begin{thm} 
The number of rows $N$ in $A$ obtained by the discrete-SLJ bound satisfies \[ \frac{\log\left\{ {k \choose t}+1\right\} }{\log\left(\frac{v^{t}}{v^{t}-1}\right)} < N \le \frac{\log\left\{ {k \choose t}+\epsilon\right\} -\log\epsilon}{\log\left(\frac{v^{t}}{v^{t}-1}\right)} \] for some $0 < \epsilon < 1$.
\end{thm}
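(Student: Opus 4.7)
Write $b = \binom{k}{t}$ and $q = 1 - 1/v^t$, and let $U_i$ denote the number of $t$-way interactions still uncovered after $i$ rows have been added by the discrete-SLJ method, so $U_0 = bv^t$. The averaging argument of Theorem~\ref{thm:slj} shows that some row of $\Sigma^k$ covers at least $U_{i-1}/v^t$ currently uncovered interactions, and since the number covered must be an integer, the method can (in the worst case) be driven by the recurrence $U_i = U_{i-1} - \lceil U_{i-1}/v^t\rceil$, with $N$ the first index at which $U_N = 0$.

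The plan is to sandwich this recursion from both sides. Using $U_{i-1}/v^t \le \lceil U_{i-1}/v^t\rceil < U_{i-1}/v^t + 1$, where the right-hand inequality is strict for \emph{every} real number (when $U_{i-1}/v^t$ is itself an integer, $\lceil U_{i-1}/v^t\rceil = U_{i-1}/v^t < U_{i-1}/v^t + 1$), the recurrence gives
\[
U_{i-1}q - 1 < U_i \le U_{i-1}q.
\]
The upper estimate merely recovers the classical SLJ bound, while the lower estimate is the source of the sharper result. Iterating $U_i > U_{i-1}q - 1$ for $N$ steps and evaluating the resulting finite geometric sum gives, still strictly,
\[
U_N > bv^t q^N - \sum_{j=0}^{N-1} q^j = bv^t q^N - v^t(1-q^N) = (b+1)v^t q^N - v^t.
\]

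Both bounds in the statement now drop out. Substituting $U_N = 0$ in the last display forces $(b+1)q^N < 1$, equivalently $(v^t/(v^t-1))^N > b+1$, and taking logarithms yields the lower bound $N > \log\{\binom{k}{t}+1\}/\log(v^t/(v^t-1))$. For the upper bound I would take $\epsilon = b/[(v^t/(v^t-1))^N - 1]$: the strict lower bound just obtained forces $0<\epsilon<1$, and a direct algebraic check shows $(b+\epsilon)/\epsilon = (v^t/(v^t-1))^N$, which rearranges, with equality, to $N = \log\{(b+\epsilon)/\epsilon\}/\log(v^t/(v^t-1))$. The one delicate point in the plan is ensuring the strict inequality $U_i > U_{i-1}q - 1$ propagates through all $N$ iterations without weakening; it is precisely this strictness which converts the coefficient $b$ that naive iteration would place in the error term into the improved $b+1$, and hence beats the $t\log v$ loss of the basic SLJ bound.
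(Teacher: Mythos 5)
Your lower-bound argument is correct and is essentially the paper's. The paper writes the same recursion as $r(i)=y\,r(i-1)-\epsilon(i-1)$ with $y=1-1/v^{t}$ and $0\le\epsilon(i)\le1$, telescopes it to $x^{n}r(n)=r(0)-\sum_{i=0}^{n-1}x^{i+1}\epsilon(i)$ with $x=1/y$, and bounds the sum by the geometric series $\sum_{i=1}^{n}x^{i}=v^{t}(x^{n}-1)$; setting $r(n)=0$ gives ${k \choose t}+1<x^{n}$. That is exactly your computation, written multiplicatively rather than additively. (One small point: the paper's discrete-SLJ recursion subtracts an extra $1$ when $i>1$ and $v^{t}\mid r(i-1)$, since every row after the first must do strictly better than the expectation; at such a step your one-step inequality $U_{i}>qU_{i-1}-1$ is an equality. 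The paper recovers the needed strictness from the final step, where $r(n)=0$ forces $\epsilon(n-1)<1$.)

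The upper bound is where the proposal has a genuine gap. Choosing $\epsilon=\binom{k}{t}\big/\bigl[(v^{t}/(v^{t}-1))^{N}-1\bigr]$ turns the claimed inequality into an algebraic identity, so your ``upper bound'' is a formal consequence of the lower bound and of nothing else about the process; indeed, since $(\log\{\binom{k}{t}+\epsilon\}-\log\epsilon)/\log(v^{t}/(v^{t}-1))\rightarrow\infty$ as $\epsilon\rightarrow0^{+}$, the statement ``$N\le\cdots$ for some $0<\epsilon<1$'' is satisfied by \emph{every} finite $N$ and carries no information on its own. The content the theorem is after, and what the paper's proof supplies, is that $\epsilon$ is a concrete quantity attached to the run: $\epsilon=\min\{\epsilon(i):1\le i<n-1\}$, where $\epsilon(i)=y\,r(i)-r(i+1)$ is the integrality gain at step $i+1$, together with the key estimate $\epsilon\ge1/v^{t}$ (when $v^{t}\nmid r(i)$ the gain is the fractional part of $y\,r(i)$, a positive multiple of $1/v^{t}$; otherwise it is $1$). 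It is this lower bound on $\epsilon$ that makes the upper bound meaningful --- it pins $N$ to within an additive $\approx t v^{t}\log v$ of the estimate $\log\{\binom{k}{t}+1\}/\log(v^{t}/(v^{t}-1))$ and thereby justifies using that estimate for the discrete-SLJ bound. Your proposal never establishes any lower bound on $\epsilon$, which is the step that actually requires an argument.
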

\begin{proof}
Let $y=\left(1-\frac{1}{v^{t}}\right)$ and $x = 1/y$. 
Let $r(i)$ denote the number of uncovered interactions that remain after $i$ rows are chosen.
Suppose that when row $i$ is chosen, it leaves 
\[ r(i)= \left \{ \begin{array}{lcl} \lfloor y r(i-1)\rfloor & \mbox{when} & i=1 \mbox{ or } r(i-1) \not\equiv 0 \pmod{v^t} \\
y r(i-1)-1 & \mbox{when} & i>1 \mbox{ and } r(i-1) \equiv 0 \pmod{v^t}
\end{array} \right . \]
uncovered interactions.

Write $\epsilon(i-1) = y r(i-1) - r(i)$ for $i \geq 1$.
Then expanding the recurrence $r(i)  = y r(i-1) - \epsilon(i-1)$, 
\[ r(n) = y^n r(0) - \sum_{i=0}^{n-1} y^{n-1-i} \epsilon(i) . \]
Rewriting in terms of $x$, 
\[ x^n r(n) =  r(0) - \sum_{i=0}^{n-1} x^{i+1}  \epsilon(i) . \]
Now $r(0) = {k \choose t}v^{t}$ and $r(n) = 0$, so 
\[ {k \choose t}v^{t} = x^n \epsilon(n-1) +  \sum_{i=0}^{n-2} x^{i+1}  \epsilon(i) . \]
Because $r(n) = 0$, $y \leq \epsilon(n-1) < 1$. Then because $0 \leq \epsilon(i) \leq 1$,   
\[ x^{n-1}  +  \sum_{i=0}^{n-2} x^{i+1}  \epsilon(i) \leq {k \choose t}v^{t} < x^n  +  \sum_{i=0}^{n-2} x^{i+1}  \epsilon(i) <  \sum_{i=1}^{n} x^{i}  = \frac{x(x^{n}-1)}{(x-1)} . \]

Simplify  ${k \choose t}v^{t} <  \frac{x(x^{n}-1)}{(x-1)}$ to obtain ${k \choose t}+1 <  x^{n}$.
Take logarithms to establish that $n>\frac{\log\left\{ {k \choose t}+1\right\} }{\log\left(\frac{v^{t}}{v^{t}-1}\right)}$.
If we select each row so that  $r(n)=\lfloor y r(n-1)\rfloor$,  we cannot cover all interactions in $\log\left\{ {k \choose t}+1\right\} /\log\left(\frac{v^{t}}{v^{t}-1}\right)$ rows. 
This establishes the lower bound.

Note that $\epsilon(0) = 0$.  Let $\epsilon=\min\{\epsilon(k): 1\le k<n-1\}$. 
Then $\frac{1}{v^t} \leq \epsilon \leq 1$, because every row selected after the first covers more than the expected number of previously uncovered interactions.
Then for sufficiently large $k$ 
\[  \begin{array}{rcl} \epsilon \frac{x(x^{n-1}-1)}{(x-1)} &= & \epsilon \sum_{i=1}^{n-1} x^{i}  \leq x^{n-1}  +  \epsilon \sum_{i=2}^{n-2} x^{i} \\
& < & x^{n-1}  + \epsilon \sum_{i=2}^{n-2} x^{i} + \epsilon (x^{n-1} - x) \\
& \leq & x^{n-1}  +  \sum_{i=0}^{n-2} x^{i+1}  \epsilon(i) \leq  {k \choose t}v^{t} \end{array}. \]
The strict inequality follows from the fact that $x>1$. Hence $\epsilon (x^{n}-1) <  {k \choose t}$, so $n < \frac{\log\left\{ {k \choose t}+\epsilon\right\} -\log\epsilon}{\log\left(\frac{v^{t}}{v^{t}-1}\right)}+1$,
and because $n$ is an integer the upper bound follows.
\end{proof}

Consequently $\log\left\{ {k \choose t}+1\right\} /\log\left(\frac{v^{t}}{v^{t}-1}\right)$ can be used to estimate  the discrete Stein-Lovász-Johnson bound.
Figure \ref{fig:Comparison-slj-dslj} compares the estimate to the discrete Stein-Lovász-Johnson bound and the Stein-Lovász-Johnson bound from Theorem \ref{thm:slj} when $t=6$ and $v=3$. 
For a wide range of values of $k$, the reduction in the number of rows is substantial. 

\begin{figure}
\begin{centering}
\includegraphics[scale=0.7]{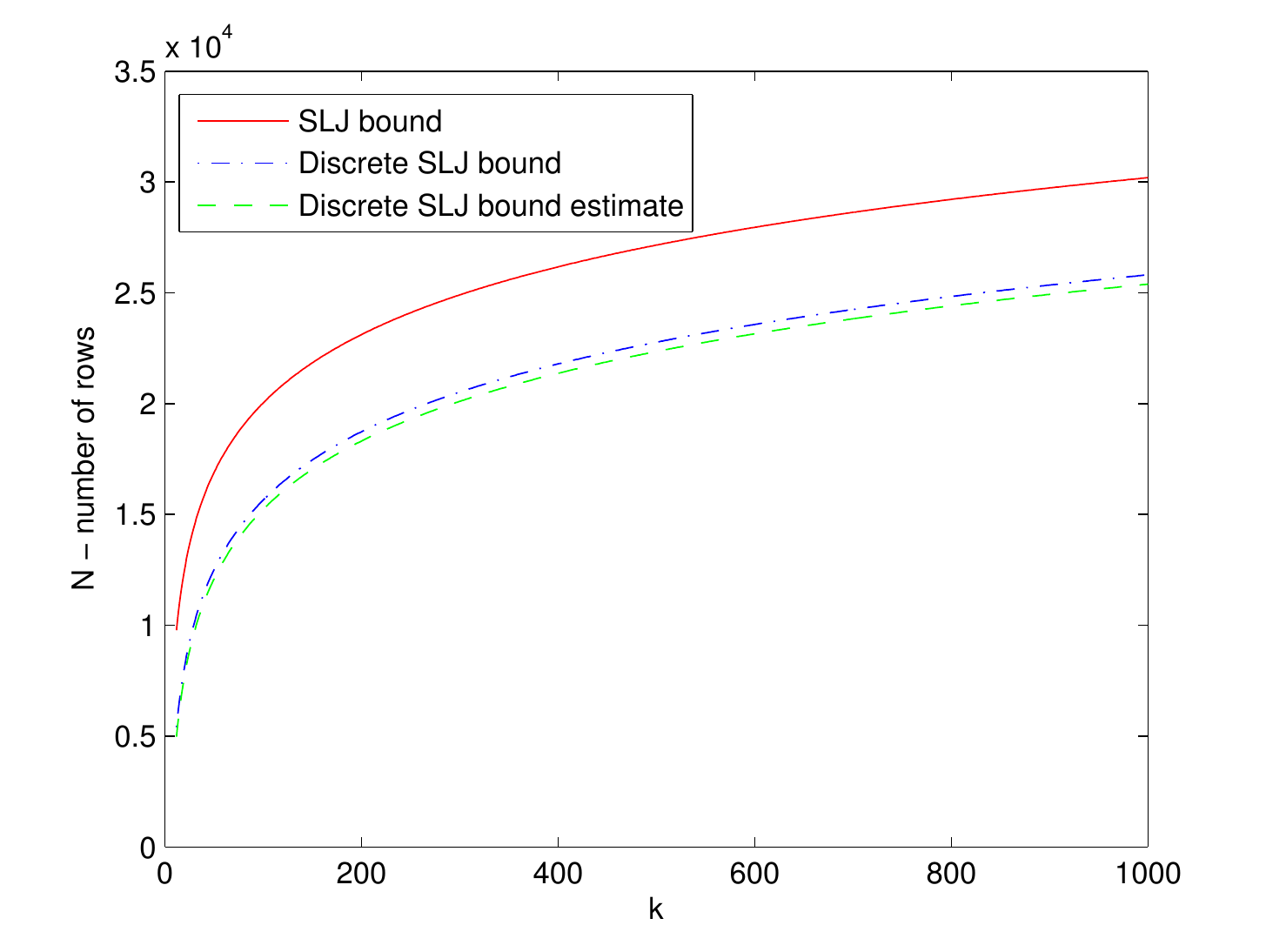}
\par\end{centering}

\caption{Comparison of the Stein-Lovász-Johnson bound, the discrete Stein-Lovász-Johnson
bound, and the estimate for the discrete Stein-Lovász-Johnson
bound. $t=6$, $v=3$.\label{fig:Comparison-slj-dslj} }
\end{figure}

The  density algorithm \cite{DDA,nDDA} enables one to produce covering arrays of sizes at most those given by the bound efficiently.  
Despite their efficiency in theory, in practice the methods are limited by the need to store information about all $t$-way interactions; even when $t=6$, $v=3$, and $k=54$, there are 18,828,003,285 6-way interactions, so the storage requirements are limiting.
Moreover, as shown in the analysis, rows added towards the end of the process account for relatively few of the interactions.  
For these reasons, we explore a two-stage approach using alteration.

\subsection{Constructing and completing a partial array}\label{sub:Partial-array}

Alteration is an important strategy in probabilistic methods \cite{alon08}.
The idea is to consider ``random'' structures that have a few ``blemishes'', in that they do not have all the desired properties. 
Such ``partial'' structures are then altered to obtain the desired property. 
To apply this technique to covering arrays, in stage 1 we construct a random $n\times k$ array with each entry  chosen from the $v$-ary alphabet $\Sigma$ independently and uniformly at random. 
The number of uncovered interactions after stage 1 can be computed using the SLJ or discrete-SLJ bounds.
In stage 2,  we add one new row for each uncovered interaction to obtain a covering array. 

For example, when $t=6,\, k=54$ and $v=3$, Theorem \ref{thm:slj} gives $\CAN(6,54,3) \leq 17,236$.
Using the alteration approach, Figure \ref{fig:minima} plots an upper bound on the size of the completed covering array against the number $n$ of rows in a partial array that covers at least the expected number of interactions.
The smallest covering array is obtained when $n=12,402$, which when completed yields $\CAN(6,54,3) \leq 13,162$. 
At least in this case, our alteration  provides a much better bound. 
We explore this in general. 

\begin{figure}
\begin{centering}
\includegraphics[scale=0.7]{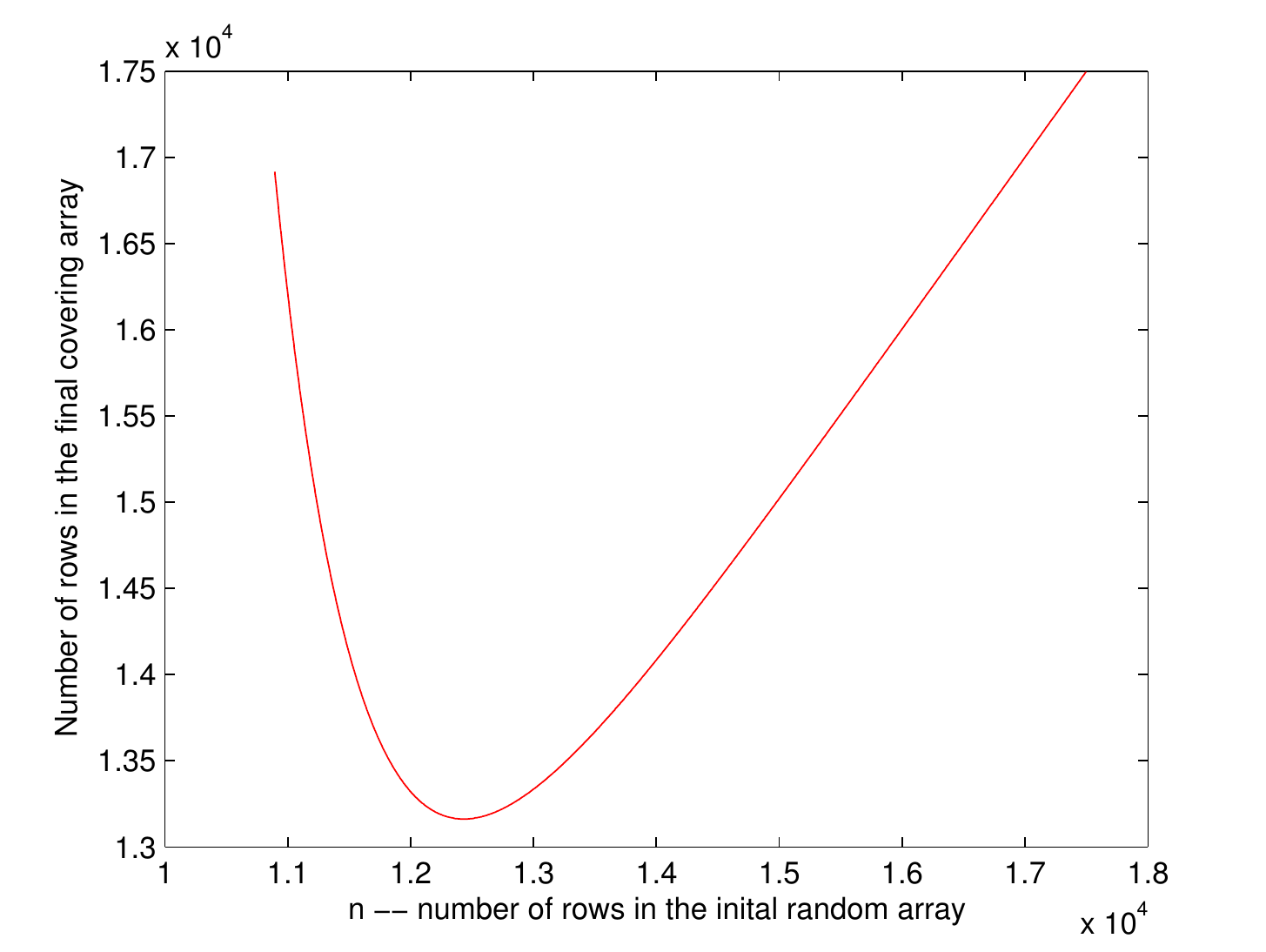}
\par\end{centering}

\caption{Plot of $n+\left \lfloor{k \choose t}v^{t}\left(1-\frac{1}{v^{t}}\right)^{n} \right \rfloor$
against $n$, the size of the partial covering array, for $t=6,\, k=54$,
and $v=3$. ${k \choose t}v^{t}\left(1-\frac{1}{v^{t}}\right)^{n}$
is the expected number of uncovered interactions in a random $n\times k$
array. The minimum number of  rows in the final covering array
is $13,162$,  achieved when the initial random array has $n=12,402$
rows. The Stein-Lovász-Johnson bound requires $17,236$ rows, and
the best known covering array has $17,197$ rows.\label{fig:minima}}
\end{figure}

\begin{thm}
\label{thm:two-stage}Let $t,\, k,\, v$ be integers with $k\ge t\ge2$,
and $v\ge2$. Then \[ \CAN(t,k,v)\le\frac{\log{k \choose t}+t\log v+\log\log\left(\frac{v^{t}}{v^{t}-1}\right)+1}{\log\left(\frac{v^{t}}{v^{t}-1}\right)}.\] \end{thm}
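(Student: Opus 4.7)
The plan is to make the two-stage alteration strategy sketched right before the theorem rigorous, then carry out a one-variable optimization. Fix an integer $n$ to be chosen later. In stage one, draw an $n \times k$ array $R$ by selecting each entry independently and uniformly from $\Sigma$. Exactly as in the proof of Theorem~\ref{thm:slj}, the expected number of $t$-way interactions left uncovered by $R$ equals $M(n) := \binom{k}{t} v^{t} \left(1 - \frac{1}{v^{t}}\right)^{n}$, so by the probabilistic method some realization $R_{0}$ has at most $\lfloor M(n) \rfloor$ uncovered interactions. In stage two, I would complete $R_{0}$ by appending, for each uncovered interaction $\iota$, one row that agrees with $\iota$ on the $t$ columns of $c(\iota)$ and is arbitrary elsewhere; the resulting array covers every $t$-way interaction and has at most $n + M(n) =: f(n)$ rows.

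Next I would optimize $f$ over $n$. The function $f$ is strictly convex on $\mathbb{R}$, and setting $f'(n) = 0$ gives $M(n^{*})\,\log\!\left(\tfrac{v^{t}}{v^{t}-1}\right) = 1$, so the optimal expected residue is the constant $1/\log(v^{t}/(v^{t}-1))$. Solving for $n^{*}$ yields
\[
n^{*} = \frac{\log\binom{k}{t} + t \log v + \log\log\!\left(\tfrac{v^{t}}{v^{t}-1}\right)}{\log\!\left(\tfrac{v^{t}}{v^{t}-1}\right)},
\]
and substituting into $f(n^{*}) = n^{*} + 1/\log\!\left(\tfrac{v^{t}}{v^{t}-1}\right)$ collapses to exactly the expression claimed in the theorem. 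So the real-valued optimum matches the stated bound term for term, and the proof reduces to certifying that one can reach this value constructively.

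The main obstacle is that $n$ must be an integer, so one cannot literally take $n = n^{*}$. Choosing $n = \lceil n^{*} \rceil$ gives $n \le n^{*} + 1$ and $(1 - 1/v^{t})^{n} \le (1 - 1/v^{t})^{n^{*}}$, which on its own yields only the slightly weaker estimate $f(n) \le f(n^{*}) + 1$. To recover the stated form, I would exploit the fact that the actual stage-two row count is $\lfloor M(n) \rfloor$ rather than $M(n)$, and combine this with the bound $|f'(n)| < 1/(v^{t}-1)$ on a unit neighbourhood of $n^{*}$ so that the fractional slack absorbs the rounding error. This bookkeeping between the continuous optimum and the integer construction is the only delicate part of the argument; the probabilistic content is entirely inherited from the SLJ argument, and the closed-form expression is just the value of $f$ at its critical point.
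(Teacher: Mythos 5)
Your argument is exactly the paper's proof: a random $n\times k$ first stage, one appended row per uncovered interaction for a second stage of at most $\lfloor\binom{k}{t}v^{t}(1-1/v^{t})^{n}\rfloor$ rows, and a calculus optimization of $f(n)=n+\binom{k}{t}v^{t}(1-1/v^{t})^{n}$ whose critical point yields the stated expression; your closed forms for $n^{*}$ and $f(n^{*})$ match the paper term for term. The integer-versus-real issue you flag at the end is a genuine subtlety, but the paper does not address it either (it simply reports the real-valued optimum), so your proposal is if anything slightly more careful than the published argument while following the identical route.
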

\begin{proof}
In an $n\times k$ array with each entry  chosen independently and uniformly at random from an alphabet $\Sigma$ of size $v$,  the expected number of uncovered $t$-way interactions is ${k \choose t}v^{t}\left(1-\frac{1}{v^{t}}\right)^{n}$.
Let $P$ be  an $n\times k$ array with at most $\lfloor{k \choose t}v^{t}\left(1-\frac{1}{v^{t}}\right)^{n}\rfloor$ uncovered interactions. 
Let $Q$ contain $\lfloor{k \choose t}v^{t}\left(1-\frac{1}{v^{t}}\right)^{n}\rfloor$ new rows, each covering a different interaction not covered in $P$.
Then $A = \left ( {P \atop Q} \right ) $  is a covering array with $n+\lfloor{k \choose t}v^{t}\left(1-\frac{1}{v^{t}}\right)^{n}\rfloor$ rows. 
So an upper bound on the number of rows in $A$ is  $n+{k \choose t}v^{t}\left(1-\frac{1}{v^{t}}\right)^{n}$. 
Applying elementary calculus, the fewest rows is \[ \frac{\log{k \choose t}+t\log v+\log\log\left(\frac{v^{t}}{v^{t}-1}\right)+1}{\log\left(\frac{v^{t}}{v^{t}-1}\right)}, \] obtained when $P$ has $n=\frac{\log{k \choose t}+t\log v+\log\log\left(\frac{v^{t}}{v^{t}-1}\right)}{\log\left(\frac{v^{t}}{v^{t}-1}\right)}$ rows. 
\end{proof}

For $v,t\ge2$, $\log\log\left(\frac{v^{t}}{v^{t}-1}\right)<0$. 
Hence,  Theorem \ref{thm:two-stage} gives a tighter bound on $\CAN(t,k,v)$ than that of Theorem \ref{thm:slj}.
Using the Taylor series expansion of $\log(1-x)$, it can be shown that $1/\log\left(\frac{v^{t}}{v^{t}-1}\right)\le v^{t}$.
In fact, in the range of values of $v$ and $t$ of interest, $1/\log\left(\frac{v^{t}}{v^{t}-1}\right)\approx v^{t}$.
So Theorem \ref{thm:two-stage} guarantees the existence of a covering array with $N\approx\frac{\log{k \choose t}+1}{\log\left(\frac{v^{t}}{v^{t}-1}\right)}\approx v^{t}\log{k \choose t}+v^{t}$ rows. 

The argument in the proof of Theorem \ref{thm:two-stage} can be made constructive.
It underlies an efficient randomized construction algorithm for covering arrays: 
In the first stage,  construct a random $n\times k$ array with $n\approx v^{t}\log{k \choose t}$ rows; then  check if the number of uncovered interactions is at most  $v^{t}$. 
If not,  randomly generate another $n\times k$ array and repeat the check. 
In the second stage  add at most $v^{t}$ rows to the partial covering array to cover the remaining interactions. 
Neither stage needs to store information about individual interactions, because we need only count the uncovered interactions in the first stage.
The second stage is deterministic and efficient.
The first stage has expected polynomial running time; it could be efficiently derandomized in principle using the methods in \cite{DDA,nDDA}, at the price of the storage of the status of individual interactions.

The proof of Theorem \ref{thm:two-stage} suggests a   general ``two-stage''  construction paradigm, in which the first stage uses one strategy to cover almost all of the interactions, and the second uses another to cover the relatively few that remain.
In related work we explore such two-phase methods for the explicit construction of covering arrays \cite{SarkarColbourn2}.

Figure \ref{fig:Comparison-slj-2-stage} compares the two-stage based bound with the Stein-Lovász-Johnson bound and the discrete Stein-Lovász-Johnson bound. 
In the cases shown, the two-stage  bound is much better than the Stein-Lovász-Johnson bound, and  not much worse than the discrete Stein-Lovász-Johnson bound. 
Therefore a purely randomized method (with much smaller memory requirements) produces covering arrays that are competitive with the guarantees from the density algorithm.  

\begin{figure}
\begin{centering}
\includegraphics[scale=0.7]{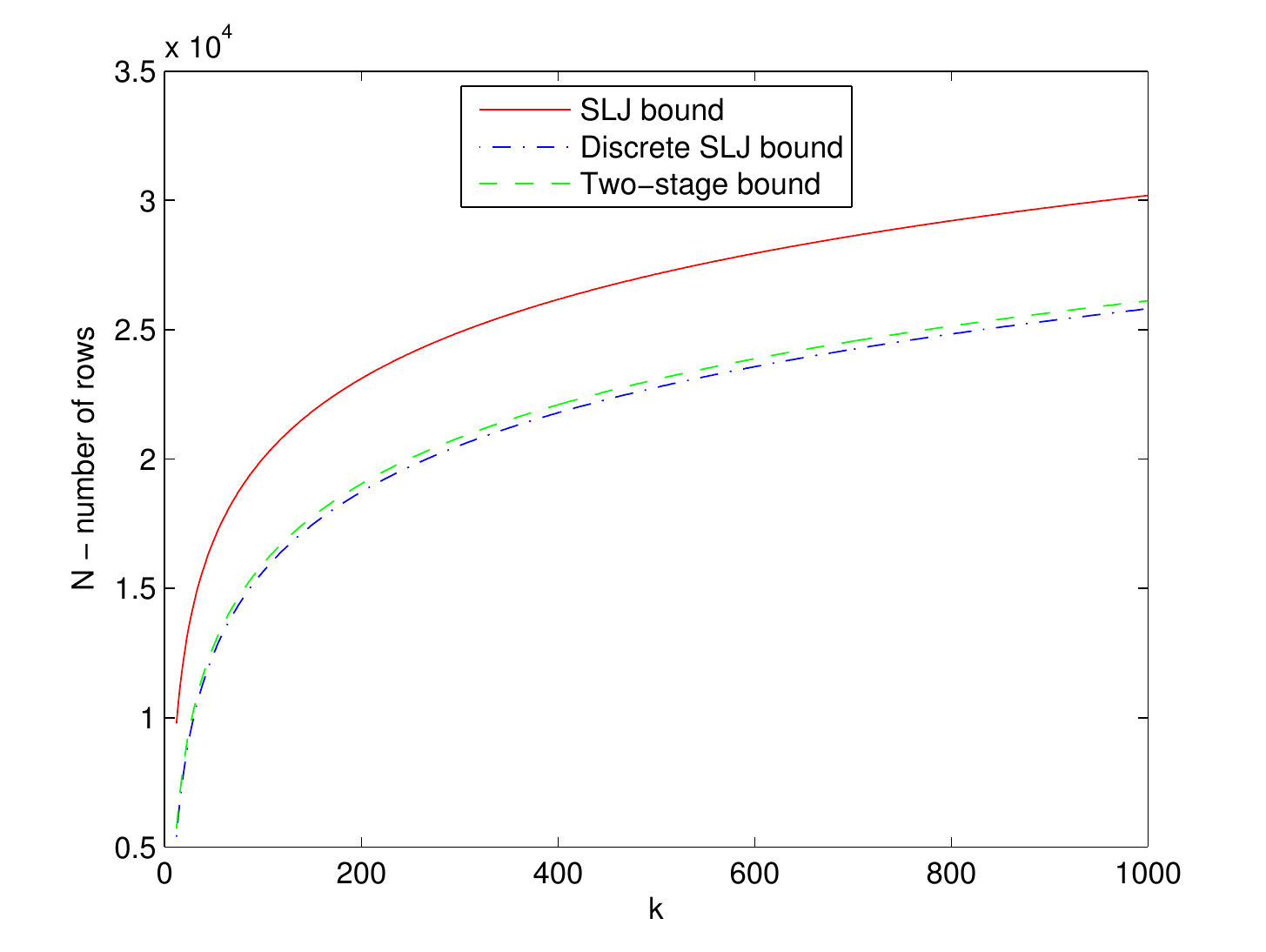}
\par\end{centering}

\caption{Comparison of Stein-Lovász-Johnson bound, discrete Stein-Lovász-Johnson
bound and two-stage based bound from Theorem \ref{thm:two-stage}. $t=6$, $v=3$.\label{fig:Comparison-slj-2-stage} }
\end{figure}

\section{Limited dependence and the Lovász local lemma}\label{sec:Partial-dependence}

When $k\ge2t$, some interactions  have no columns in common. 
The events of coverage of such interactions are independent.
Neither Theorem \ref{thm:slj} nor Theorem \ref{thm:two-stage} takes advantage of this. 
Consider an $N\times k$ array $A$ with each entry  chosen independently and uniformly at random from $\Sigma$. 
Let $A_{\iota}$ denote the event that the interaction $\iota\in\mathscr{I}_{t}$ is not covered in $A$. 
$A_{\iota}$ depends on all  events $\{ A_{\rho} : \rho\in\mathscr{I}_{t}, c(\iota) \cap c(\rho) \neq \emptyset\}$, and only on those events. 
Hence when $k\ge2t$, there are events of which $A_{\iota}$ is  independent. 
Because of this limited dependence, the upper bound on $\CAN(t,k,v)$ from Theorem \ref{thm:slj} can be considerably improved by applying the Lovász local lemma. 

\begin{lem}\label{lllsym}
(Lovász local lemma; Symmetric case) (see {\rm \cite{alon08}}) 
Let $A_{1},A_{2},\ldots,A_{n}$ events in an arbitrary probability space. 
Suppose that each event $A_{i}$ is mutually independent of a set of all other events $A_{j}$ except for at most $d$, and that $\Pr[A_{i}]\le p$ for all $1\le i\le n$.
If $ep(d+1)\le1$, then $\Pr[\cap_{i=1}^{n}\bar{A_{i}}]>0$.
\end{lem}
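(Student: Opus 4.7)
The plan is to derive the symmetric local lemma from the stronger asymmetric version, since the symmetric case is an easy consequence once the asymmetric statement is in hand. I would first state the asymmetric form: given a dependency graph in which each $A_i$ is mutually independent of all events outside its neighborhood $\Gamma(i)$, and given reals $x_i \in [0,1)$ with $\Pr[A_i] \le x_i \prod_{j \in \Gamma(i)} (1-x_j)$, one has $\Pr[\cap_{i} \bar{A_i}] \ge \prod_i (1-x_i) > 0$. From there, the symmetric case follows by setting $x_i = 1/(d+1)$ for every $i$ and verifying, via the standard inequality $(1-\tfrac{1}{d+1})^d > 1/e$, that the hypothesis $ep(d+1) \le 1$ implies $p \le \tfrac{1}{d+1}(1-\tfrac{1}{d+1})^d$.

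The heart of the argument is the asymmetric case, and I would prove it through the customary auxiliary inductive claim: for every index $i$ and every subset $S \subseteq \{1,\ldots,n\} \setminus \{i\}$,
\[ \Pr\!\left[A_i \,\Big|\, \bigcap_{j \in S} \bar{A_j}\right] \le x_i. \]
The induction is on $|S|$. The base case $S = \emptyset$ is immediate from $\Pr[A_i] \le x_i \prod_{j \in \Gamma(i)}(1-x_j) \le x_i$. For the inductive step, partition $S$ as $S_1 = S \cap \Gamma(i)$ and $S_2 = S \setminus \Gamma(i)$, and expand the conditional probability as a ratio
\[ \Pr\!\left[A_i \,\Big|\, \bigcap_{j \in S} \bar{A_j}\right] = \frac{\Pr\!\left[A_i \cap \bigcap_{j \in S_1} \bar{A_j} \,\Big|\, \bigcap_{j \in S_2} \bar{A_j}\right]}{\Pr\!\left[\bigcap_{j \in S_1} \bar{A_j} \,\Big|\, \bigcap_{j \in S_2} \bar{A_j}\right]} . \]
For the numerator, drop the intersection with $\bar{A_j}$ and use mutual independence of $A_i$ from the events indexed by $S_2$ to bound it by $\Pr[A_i] \le x_i \prod_{j \in \Gamma(i)}(1-x_j)$. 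For the denominator, enumerate $S_1 = \{j_1,\ldots,j_r\}$, write it as a product of conditional probabilities $\prod_\ell \Pr[\bar{A}_{j_\ell} \mid \bar{A}_{j_1} \cap \cdots \cap \bar{A}_{j_{\ell-1}} \cap \bigcap_{j \in S_2}\bar{A}_j]$, and apply the inductive hypothesis to each factor (each conditioning set has size strictly less than $|S|$) to bound it below by $\prod_{j \in S_1}(1-x_j) \ge \prod_{j \in \Gamma(i)}(1-x_j)$. The factors cancel and leave $x_i$.

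Once the auxiliary claim is established, the conclusion $\Pr[\cap \bar{A_i}] \ge \prod(1-x_i)$ follows by writing the joint probability as a telescoping product $\prod_{i=1}^n \Pr[\bar{A_i} \mid \bar{A}_1 \cap \cdots \cap \bar{A}_{i-1}]$ and applying the claim to each factor. I expect the main obstacle to be the careful handling of the case $S_1 = \emptyset$ in the denominator and the justification that mutual independence (not merely pairwise) lets us discard the $S_2$ conditioning in the numerator bound; once the dependency-graph definition is used in its full strength, these steps are routine. Finally, the symmetric reduction amounts to a one-line calculation checking the stated inequality, which I would not grind through.
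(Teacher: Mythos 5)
The paper itself offers no proof of this lemma---it is quoted verbatim from \cite{alon08}---and your argument is exactly the standard one given there: prove the asymmetric version by induction on the size of the conditioning set via the claim $\Pr[A_i \mid \cap_{j\in S}\bar{A_j}]\le x_i$, then specialize with $x_i=1/(d+1)$ and the inequality $\left(1-\tfrac{1}{d+1}\right)^{d}>1/e$. The proposal is correct, including the two delicate points you flag (the $S_1=\emptyset$ case and the use of mutual, not pairwise, independence to discard the $S_2$ conditioning), so nothing further is needed.
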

The symmetric version of Lovász local lemma provides an upper bound on the probability of a ``bad'' event in terms of the dependence structure among such bad events, so that there is a guaranteed outcome in which all  ``bad'' events are avoided. 
In this and following sections we successively improve the upper bound on $\CAN(t,k,v)$ asymptotically by exploiting this limited dependence among interactions through the Lovász local lemma.

To simplify the comparisons, define $d(t,v)=\lim\sup_{k\rightarrow\infty}\frac{\CAN(t,k,v)}{\log k}$.
Theorem \ref{thm:slj} establishes that  $d(t,v)\le\frac{t}{\log\frac{v^{t}}{v^{t}-1}}$.
Using Lemma \ref{lllsym}, Godbole, Skipper, and Sunley \cite{GSS} establish a tighter bound:
\begin{thm}
\label{thm:godbole}{\rm \cite{GSS}} Let $t,\, v$ be integers with $t,v\ge2$. Then
\[
d(t,v)\le\frac{t-1}{\log\frac{v^{t}}{v^{t}-1}}
\]
\end{thm}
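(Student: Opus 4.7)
The plan is to apply the symmetric Lovász local lemma (Lemma \ref{lllsym}) to the family of ``bad'' events $\{A_\iota : \iota \in \mathscr{I}_t\}$ associated with the random $N\times k$ array $A$ whose entries are drawn independently and uniformly from $\Sigma$. The gain over Theorem \ref{thm:slj} is that the relevant quantity in the local lemma is not the total number of bad events but only the local dependency degree $d$, and I expect this degree to grow as $k^{t-1}$ rather than $k^t$, which will directly produce the $(t-1)$ in the numerator.

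First I would fix the probability parameter: $p = \Pr[A_\iota] = (1 - v^{-t})^N$ for every $\iota$, exactly as in the proof of Theorem \ref{thm:slj}. Next I would bound the dependency degree. As already noted in the text, $A_\iota$ is mutually independent of $\{A_\rho : c(\rho) \cap c(\iota) = \emptyset\}$, so it suffices to count interactions $\rho$ whose column set meets $c(\iota)$. The number of $t$-subsets of $[k]$ meeting a fixed $t$-subset is $\binom{k}{t} - \binom{k-t}{t}$, and each such subset carries $v^t$ interactions, giving
\[
d + 1 \le v^t\left[\binom{k}{t} - \binom{k-t}{t}\right].
\]
The condition $e\,p\,(d+1) \le 1$ of Lemma \ref{lllsym} then becomes
\[
e\,v^t\left(1 - \tfrac{1}{v^t}\right)^N\!\left[\binom{k}{t} - \binom{k-t}{t}\right] \le 1,
\]
which on taking logarithms and solving for $N$ yields
\[
N \ge \frac{1 + t\log v + \log\!\left[\binom{k}{t} - \binom{k-t}{t}\right]}{\log\frac{v^t}{v^t-1}}.
\]

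The main obstacle, and the only nonroutine step, is to show that this logarithm is $(t-1)\log k + O(1)$ rather than $t\log k + O(1)$. The point is that although $\binom{k}{t}$ and $\binom{k-t}{t}$ are individually $\Theta(k^t)$, their leading $k^t/t!$ terms cancel. Expanding each falling factorial up to the first subleading term gives
\[
\binom{k}{t} - \binom{k-t}{t} = \frac{t\,k^{t-1}}{(t-1)!}\bigl(1 + \mathrm{o}(1)\bigr),
\]
so $\log\!\left[\binom{k}{t} - \binom{k-t}{t}\right] = (t-1)\log k + \mathrm{O}(1)$ as $k\to\infty$. Once this estimate is in hand, the displayed lower bound on $N$ reads
\[
N \ge \frac{(t-1)\log k}{\log\frac{v^t}{v^t-1}}\bigl(1 + \mathrm{o}(1)\bigr),
\]
and Lemma \ref{lllsym} guarantees a covering array of this many rows exists. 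Dividing by $\log k$ and taking $\limsup$ produces the claimed bound $d(t,v) \le (t-1)/\log\frac{v^t}{v^t-1}$, completing the proof.
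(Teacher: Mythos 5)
Your proof is correct and follows precisely the route the paper intends: the paper states this result only as a citation to Godbole--Skipper--Sunley without reproducing the argument, but the dependency structure you use ($A_\iota$ mutually independent of all $A_\rho$ with $c(\rho)\cap c(\iota)=\emptyset$) is exactly the one set up in the paragraph preceding the theorem, and your application of Lemma \ref{lllsym} with $p=(1-v^{-t})^N$ and $d=O(k^{t-1})$ mirrors the paper's own proof of Theorem \ref{thm:Improvement}. The only remark worth making is that the cancellation estimate for $\binom{k}{t}-\binom{k-t}{t}$, while correct, is more work than needed: the crude count $d+1\le v^{t}\,t\binom{k-1}{t-1}=O(k^{t-1})$ (choose a shared column of $c(\iota)$, then the remaining $t-1$ columns, then the $v^t$ symbol assignments) gives the $(t-1)\log k$ numerator immediately.
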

Figure \ref{fig:Comparison-slj-lll} compares the bounds from Theorems \ref{thm:slj} and \ref{thm:godbole} for $t=6$ and $v=3$.
The bounds are plotted in log-log scale to highlight the asymptotic difference between them.

\begin{figure}
\begin{centering}
\includegraphics[scale=0.7]{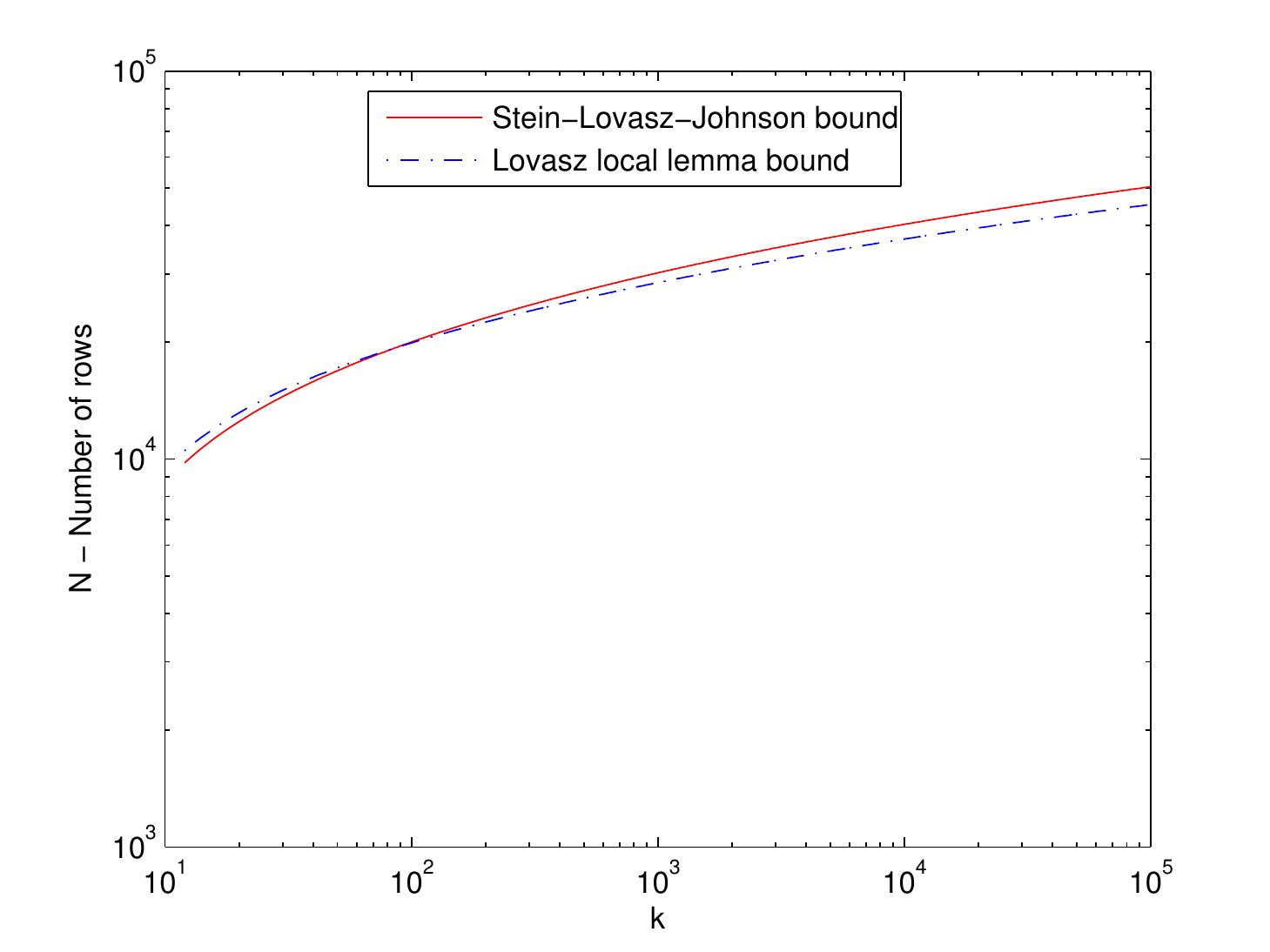}
\par\end{centering}

\caption{Comparison of Stein-Lovász-Johnson and Lovász local lemma bounds for $t=6$ and $v=3$. The graph is plotted in log-log scale. \label{fig:Comparison-slj-lll}}
\end{figure}

\subsection{Group action}\label{sub:Group-action}

We apply the Lovász local lemma in conjunction with a group action. Let $\Gamma$ be a permutation group on the $v$-symbol alphabet $\Sigma$. We define the action of $ \Gamma $ on the set of all $ t $-way interaction on $ k $ factors in the natural way: For $ \sigma \in \Gamma $ and $\iota = \{(c_{i},a_{i})\,:\,1\le i\le t,\, c_{i}\in[k],\, c_{i}\neq c_{j}\,\text{for }i\neq j,\,\text{and }a_{i}\in\Sigma\}$, $ \sigma $ maps $ \iota $ to $ \{(c_i,\sigma(a_i))\,:\,1 \le i \le t\} $.
The strategy of covering orbits of interactions under the action of the permutation group $\Gamma$ on the symbols has been used in direct and computational methods  \cite{cck,MeagherS}, and  in randomized and derandomized methods   \cite{ColCECA}. 
The objective is to construct an array $A$ that covers all the orbits under $\Gamma$ of $t$-way interactions; to be precise, for every orbit, at least one row must cover an interaction in this orbit.  
The rows of $A$, when developed over $\Gamma$ provides an array that covers all $t$-way interactions, and therefore is a covering array. Group action here essentially works as a search space reduction technique.
In \cite{ColCECA} it is noted that using a group action appears to construct covering arrays with fewer rows than using similar methods on the covering array directly. 
We analyze the effect of group actions on the Lovász local lemma bound to further tighten the bound on the asymptotic size of covering arrays.

The action of a group $ \Gamma $ on $ \Sigma $ is  \emph{sharply transitive} if for every $ u,v \in \Sigma $ there is exactly one $ \sigma \in \Gamma $ that maps $ u $ to $ v $. When the action of $ \Gamma $ is sharply transitive on $ \Sigma $, $ |\Gamma|=|\Sigma|=v $. For example, the action of the cyclic group $ C_v $ on $ v $ symbols is sharply transitive. Similarly, the action of a group $ \Gamma $ on $ \Sigma $ is  \emph{sharply $ l $-transitive} if for all pairwise distinct $ u_1,\ldots,u_l \in \Sigma $ and pairwise distinct $ v_1,\ldots,v_l \in \Sigma $ there is exactly one $ \sigma \in \Gamma $ that maps $ u_i $ to $ v_i $ for $ 1 \le i \le l $.
\begin{thm}
\label{thm:Improvement}Let $t,\, v$ be integers with $t,v\ge2$. Then
\[
d(t,v)\le\frac{v(t-1)}{\log\left(\frac{v^{t-1}}{v^{t-1}-1}\right)}
\]
\end{thm}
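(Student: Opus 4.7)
The plan is to apply the symmetric Lov\'asz local lemma (Lemma \ref{lllsym}) to \emph{orbits} of $t$-way interactions under a sharply transitive group action on $\Sigma$, rather than to individual interactions as in Theorem \ref{thm:godbole}. Fix such a group $\Gamma$ (the cyclic group $C_v$ will do), so $|\Gamma|=v$. I will build a random $N\times k$ array $A$ covering at least one representative from every $\Gamma$-orbit, and then develop $A$ over $\Gamma$; the resulting $vN\times k$ array automatically covers every $t$-way interaction and is therefore a $\CA(vN;t,k,v)$.

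First I would pin down the orbit sizes. Sharp transitivity forces any two distinct $\sigma_1,\sigma_2\in\Gamma$ to disagree on every element of $\Sigma$, so the $v$ tuples $(\sigma(a_1),\ldots,\sigma(a_t))$ for $\sigma\in\Gamma$ are pairwise distinct in every coordinate. Consequently every orbit of a $t$-way interaction has size exactly $v$, and the total number of orbits is ${k \choose t}v^{t-1}$. For a fixed orbit $O$ on column set $c(O)$, the $v$ favourable symbol assignments on those columns are disjoint, so a single uniformly random row covers some element of $O$ with probability $v/v^t=1/v^{t-1}$. Letting $B_O$ denote the event that no row of $A$ covers any element of $O$, independence across rows gives $\Pr[B_O]=(1-1/v^{t-1})^N$.

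Next I would bound the dependence graph. Since $B_O$ depends only on the entries of $A$ in the columns $c(O)$, it is mutually independent of $\{B_{O'}:c(O)\cap c(O')=\emptyset\}$. The number of $t$-subsets of $[k]$ that meet $c(O)$ is ${k \choose t}-{k-t \choose t}$, each carrying $v^{t-1}$ orbits, so the dependence degree is $d\le v^{t-1}\bigl({k \choose t}-{k-t \choose t}\bigr)-1=\Theta(v^{t-1}k^{t-1})$ as $k\to\infty$. Lemma \ref{lllsym} then produces an $A$ meeting every orbit as soon as $e\,(1-1/v^{t-1})^{N}(d+1)\le 1$; taking logarithms and using $\log(d+1)=(t-1)\log k+\mbox{O}(1)$, the smallest such $N$ satisfies $N\le \frac{(t-1)\log k}{\log\bigl(\frac{v^{t-1}}{v^{t-1}-1}\bigr)}(1+\mbox{o}(1))$.

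Multiplying by $|\Gamma|=v$ yields $\CAN(t,k,v)\le vN$, and dividing by $\log k$ and passing to the $\limsup$ delivers the claimed bound on $d(t,v)$. The main obstacle I expect is the orbit bookkeeping: sharp transitivity is needed to fix the orbit size at exactly $v$ (so $1/v^{t-1}$ is the correct per-row orbit-coverage probability), and one must verify that developing $A$ over $\Gamma$ genuinely converts orbit-coverage into full coverage, i.e.\ that if a row of $A$ covers $\iota\in O$ then applying $\sigma\in\Gamma$ entrywise to that row covers $\sigma(\iota)$, so that the $v$ developed copies together exhaust $O$. Everything else is routine probabilistic calculation and asymptotic expansion.
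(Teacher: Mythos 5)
Your proposal is correct and follows essentially the same route as the paper: a sharply transitive group action yielding orbits of size exactly $v$, the symmetric Lov\'asz local lemma applied to orbit-coverage failure events, and development of the resulting array over $\Gamma$. The only (immaterial) difference is bookkeeping: the paper bundles the $v^{t-1}$ orbits on each column $t$-set $\tau$ into a single bad event $A_\tau$ with $\Pr[A_\tau]\le v^{t-1}(1-1/v^{t-1})^{n}$ and dependence degree $<t\binom{k}{t-1}$, whereas you keep one bad event per orbit with the factor $v^{t-1}$ absorbed into the dependence degree instead, and the product $p(d+1)$ --- hence the asymptotic bound --- comes out the same.
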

\begin{proof}
Let $ \Gamma $ be a group that acts sharply transitively on $ \Sigma $. 
Let $\mathscr{C}_{t}={[k] \choose t}$, and $\tau\in\mathscr{C}_{t}$ be a collection of $t$ columns. 
The action of $\Gamma$ partitions the set of interactions involving the columns in $\tau$ into $v^{t-1}$ orbits of length $v$ each. 
We consider an $n\times k$ array $A$ with each entry  chosen independently and uniformly at random from the alphabet $\Sigma$ . 
We want to cover all the orbits for every $\tau\in\mathscr{C}_{t}$. 
The probability that there is at least one orbit involving $\tau$ that is not covered is $v^{t-1}\left(1-\frac{1}{v^{t-1}}\right)^{n}$.

For $\tau\in\mathscr{C}_{t}$ , let $A_{\tau}$ denote the event that not all the orbits involving the columns in $\tau$ are covered in $A$. 
So $\Pr[A_{\tau}]\le v^{t-1}\left(1-\frac{1}{v^{t-1}}\right)^{n}$ for all $\tau\in\mathcal{T}$. 
The event $A_{\tau}$ is not independent of  event $A_{\rho}$ if and only if $\tau$ and $\rho$ share a column. 
So $d\le{t \choose 1}{k-1 \choose t-1}<t{k \choose t-1}$.
By the Lovász local lemma, if $ev^{t-1}\left(1-\frac{1}{v^{t-1}}\right)^{n}t{k \choose t-1}<1$,
there exists an $n\times k$ array that covers every orbit on
every $t$-column combination of $A$. Solving for $n$, and
then developing $A$ over the group $\Gamma$, we obtain a covering
array of size $N$, where

\begin{eqnarray*}
N & = & vn\\
 & > & v\frac{1+\log\left(v^{t-1}t{k \choose t-1}\right)}{\log\left(\frac{v^{t-1}}{v^{t-1}-1}\right)}\\
 & \ge & v\frac{1+\log\left(v^{t-1}t\left(\frac{k}{t-1}\right)^{t-1}\right)}{\log\left(\frac{v^{t-1}}{v^{t-1}-1}\right)}\\
& = & \frac{v(t-1)\log k}{\log\left(\frac{v^{t-1}}{v^{t-1}-1}\right)}\left\{ 1+\frac{1}{(t-1)\log k}+\frac{\log v}{\log k}+\frac{\log t}{(t-1)\log k}-\frac{\log(t-1)}{\log k}\right\} \\
 & = & \frac{v(t-1)\log k}{\log\left(\frac{v^{t-1}}{v^{t-1}-1}\right)}\left\{ 1+\mbox{\rm o}(1)\right\} 
\end{eqnarray*}

This yields the required bound on $d(t,v)$.
\end{proof}

Comparing the bounds from Theorems \ref{thm:godbole} and \ref{thm:Improvement}, using the Taylor series expansion of $\log(1-x)=-x-\frac{x^{2}}{2}-O(x^{3})$, we find that
\[ \begin{array}{rcl}
\frac{t-1}{\log\left(\frac{v^{t}}{v^{t}-1}\right)}=\frac{t-1}{-\log\left(1-\frac{1}{v^{t}}\right)}&\approx&\frac{t-1}{\left(\frac{1}{v^{t}}+\frac{1}{2.v^{2t}}\right)}=\frac{v^{t}(t-1)}{1+\frac{1}{2v^{t}}}, \mbox{ and }\\
\frac{v(t-1)}{\log\left(\frac{v^{t-1}}{v^{t-1}-1}\right)}=\frac{v(t-1)}{-\log\left(1-\frac{1}{v^{t-1}}\right)}&\approx&\frac{v(t-1)}{\left(\frac{1}{v^{t-1}}+\frac{1}{2v^{2t-2}}\right)}=\frac{v^{t}(t-1)}{1+\frac{1}{2v^{t-1}}}.
\end{array} \]

Hence the bound of Theorem \ref{thm:Improvement} is tighter.
Franceti\'{c} and Stevens \cite{francetic15} also report the bound in Theorem \ref{thm:Improvement}.
Their approach uses  entropy compression arguments, and appears to be more involved than the approach here. 
Furthermore,  we can get a better improvement  by using a larger group:

\begin{thm}
\label{thm:frobenius}Let $t\ge2$ be an integer and $v$ be a prime power. Then
\[
d(t,v)\le\frac{v(v-1)(t-1)}{\log\left(\frac{v^{t-1}}{v^{t-1}-v+1}\right)}
\]
\end{thm}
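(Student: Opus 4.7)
The plan is to repeat the argument of Theorem \ref{thm:Improvement} with a larger group: replace the sharply transitive group of order $v$ by a \emph{sharply $2$-transitive} group of order $v(v-1)$. Since $v$ is a prime power, the affine (Frobenius) group $\Gamma=\mathrm{AGL}(1,\mathbb{F}_{v})=\{x\mapsto ax+b:a\in\mathbb{F}_{v}^{*},\,b\in\mathbb{F}_{v}\}$ acts sharply $2$-transitively on $\Sigma=\mathbb{F}_{v}$ and has order $v(v-1)$. I would extend $\Gamma$ to act coordinatewise on $\Sigma^{t}$ and hence on the set of interactions at each fixed column set $\tau\in\mathscr{C}_{t}$.

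The first key step is the orbit analysis. Sharp $2$-transitivity makes the pointwise stabilizer of any two distinct symbols trivial, so any tuple in $\Sigma^{t}$ with at least two distinct entries lies in an orbit of size $v(v-1)$; while the diagonal tuples $(a,a,\ldots,a)$ for $a\in\Sigma$ form a single orbit of size $v$. Counting shows that $\Gamma$ partitions the interactions at $\tau$ into exactly one diagonal orbit of size $v$ and $(v^{t-1}-1)/(v-1)$ non-diagonal orbits of size $v(v-1)$ each.

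I would handle the two kinds of orbits separately. Prepend the $v$ constant rows $(a,a,\ldots,a)$, $a\in\Sigma$, to the final array; these cover the unique diagonal orbit at every $\tau$ at a cost that vanishes under $\limsup_{k}N/\log k$. For the non-diagonal orbits, pick $n$ seed rows uniformly at random and develop them over $\Gamma$. Restricted to $\tau$, the developed image of a seed equals the $\Gamma$-orbit of the seed's restriction, so a fixed non-diagonal orbit at $\tau$ is missed by one seed with probability $1-v(v-1)/v^{t}=(v^{t-1}-v+1)/v^{t-1}$. A union bound over the $(v^{t-1}-1)/(v-1)$ non-diagonal orbits yields, for the bad event $B_{\tau}$ ``some non-diagonal orbit at $\tau$ is uncovered'',
\[
\Pr[B_{\tau}]\le\frac{v^{t-1}-1}{v-1}\left(\frac{v^{t-1}-v+1}{v^{t-1}}\right)^{n},
\]
and $B_{\tau}$ is mutually independent of $\{B_{\rho}:\rho\cap\tau=\emptyset\}$, so the degree satisfies $d<t{k \choose t-1}$. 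Applying the symmetric Lov\'asz local lemma $ep(d+1)\le 1$, solving for $n$, and using $N\le v+v(v-1)n$ together with $\log{k \choose t-1}=(t-1)\log k\,(1+\mbox{\rm o}(1))$ yields the stated asymptotic bound on $d(t,v)$.

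The main obstacle is recognizing that the diagonal orbit must be removed from the union bound by a separate direct construction. Its size is only $v$, not $v(v-1)$, so for $v\ge 3$ the diagonal's uncovered probability $(1-1/v^{t-1})^{n}$ decays strictly more slowly than $(1-(v-1)/v^{t-1})^{n}$ and would otherwise dominate the union bound, reproducing only the weaker denominator $\log(v^{t-1}/(v^{t-1}-1))$ already obtained in Theorem \ref{thm:Improvement}. Pre-covering by the $v$ constant rows is precisely what unlocks the improved denominator $\log\!\bigl(v^{t-1}/(v^{t-1}-v+1)\bigr)$.
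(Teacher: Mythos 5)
Your proposal is correct and follows essentially the same route as the paper: the same sharply $2$-transitive (Frobenius/affine) group, the same orbit decomposition into one constant orbit of length $v$ plus $(v^{t-1}-1)/(v-1)$ full orbits of length $v(v-1)$, the same union bound and symmetric Lov\'asz local lemma with $d<t{k \choose t-1}$, and the same handling of the short orbit by $v$ extra rows whose cost vanishes asymptotically. Your closing remark about why the diagonal orbit must be excluded from the union bound is a correct and useful observation, implicit in the paper's treatment.
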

\begin{proof}
Let $ \Gamma $ be a group that is sharply 2-transitive on $ v $ symbols.
Consider the action of $\Gamma$ on the set of interactions involving the columns $\tau\in{[k] \choose t}$ . 
Under the action of $\Gamma$  the $v$ interactions $\{(c_{i},v_{i})\,:\, c_{i}\in\tau,\,1\le i\le t\}$ with $v_{1}=\ldots=v_{t}$ (the \emph{constant} interactions) form a single orbit of length $v$. 
The remaining $v^{t}-v$ interactions form $\frac{v^{t-1}-1}{v-1}$ orbits, each of length $v(v-1)$. 
So the probability that a full length orbit is not covered in a $n\times k$ random array is $\left(1-\frac{v-1}{v^{t-1}}\right)^{n}$,
and the probability that at least one of these orbits is not covered in the random array is at most $\left(\frac{v^{t-1}-1}{v-1}\right)\left(1-\frac{v-1}{v^{t-1}}\right)^{n}$ by the union bound.

Using the Lovász local lemma, when $e\left(\frac{v^{t-1}-1}{v-1}\right)\left(1-\frac{v-1}{v^{t-1}}\right)^{n}t{k \choose t-1}<1$, there exists an $n\times k$ array that covers all the full orbits of interactions on all $t$-column combinations. 
Developing this array over $\Gamma$ and adding $v$ additional rows to cover the short orbit, we obtain a covering array with $N$ rows, with 

\begin{eqnarray*}
N & = & v(v-1)n+v\\
 & > & v(v-1) \frac{1+\log\left(t {k \choose t-1}\right)+\log\left(\frac{v^{t-1}-1}{v-1}\right)}{\log\left(\frac{v^{t-1}}{v^{t-1}-v+1}\right)}+v\\
 & \ge & v(v-1) \frac{1+\log\left(t \left(\frac{k}{t-1}\right)^{t-1}\right)+\log\left(\frac{v^{t-1}-1}{v-1}\right)}{\log\left(\frac{v^{t-1}}{v^{t-1}-v+1}\right)}+v\\
 & = & \frac{v(v-1)(t-1)\log k}{\log\left(\frac{v^{t-1}}{v^{t-1}-v+1}\right)}\left\{ 1+\mbox{\rm o}(1)\right\} 
\end{eqnarray*}

This proves the theorem.
\end{proof}

Again using the Taylor series expansion, the bound obtained in Theorem \ref{thm:frobenius} is tighter than the bound in Theorem \ref{thm:Improvement}, as follows.

\[
\frac{v(v-1)(t-1)}{\log\left(\frac{v^{t-1}}{v^{t-1}-v+1}\right)}=\frac{v(v-1)(t-1)}{-\log\left(1-\frac{v-1}{v^{t-1}}\right)}\approx\frac{v(v-1)(t-1)}{\left\{ \frac{v-1}{v^{t-1}}+\frac{(v-1)^{2}}{2v^{2t-2}}\right\} }=\frac{v^{t}(t-1)}{1+\frac{v-1}{2v^{t-1}}}
\]

Let $G$ be the Frobenius group defined on the finite field $\mathbb{F}_{v}$, i.e. $G=\{g:\mathbb{F}_{v}\rightarrow\mathbb{F}_{v}\,:\, g(x)=ax+b,\, x,a,b\in\mathbb{F}_{v},\, a\neq0\}$. $ G $ is an efficiently constructible group that acts sharply 2-transitively on the set of $ v $ symbols and can be used for practical construction of covering arrays \cite{ColCECA}.

It is natural to consider the action of larger groups in seeking further improvements.  
One simple but important idea in Theorem \ref{thm:frobenius} is to treat full length orbits using the Lovász local lemma, adjoining a small number of additional rows to cover the short orbits. 
Thus far we have treated a sharply 1-transitive group (the cyclic group) and a sharply 2-transitive group (the Frobenius group).
In order to generalize,  the next natural choice is the projective general linear (PGL) group for $v=q+1$ where $q$ is a prime power, which is a sharply 3-transitive group of order $v(v-1)(v-2)$. Let $\Gamma$ be the PGL group on $v$ symbols.
The action of $\Gamma$ on $t$-way interactions forms orbits of lengths $v$, $v(v-1)$, and $v(v-1)(v-2)$.  
Constant interactions lie in orbits of length $v$, interactions involving precisely two distinct symbols lie in orbits of length $v(v-1)$, and the $r=\frac{v^{t-1}-(v-1)(2^{t-1}-1)-1}{(v-1)(v-2)}$ others lie in full length orbits.  
Constant orbits can be handled as in Theorem \ref{thm:frobenius}, and full length orbits can be treated using the Lov\'asz local lemma. Unlike constant orbits, orbits of length $v(v-1)$ cannot be covered with a  number of rows that is independent of $k$. 
If we  cover the orbits of length $v(v-1)$ as we covered full length orbits, we see no improvement over Theorem \ref{thm:frobenius}.
We adapt a method from \cite{CCL} to gain occasional improvements.

\begin{thm}
\label{thm:pgl}Let $t\ge2$ be an integer and $v-1$ be a prime power. Then

\[
d(t,v)\le\frac{v(v-1)(v-2)(t-1)}{\log\left\{ \frac{v^{t-1}}{v^{t-1}-(v-1)(v-2)}\right\} }+\frac{v(v-1)(t-1)}{\log\left(\frac{2^{t-1}}{2^{t-1}-1}\right)}
\]
\end{thm}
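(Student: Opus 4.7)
The plan is to follow the template of Theorem \ref{thm:frobenius} but using the sharply 3-transitive projective general linear group $\Gamma = {}$PGL$(2,v-1)$, of order $v(v-1)(v-2)$, acting on the $v$-point projective line $\Sigma$; this is available because $v-1$ is a prime power. The covering array is assembled from three pieces: a random $n_1 \times k$ $v$-ary array whose $\Gamma$-development covers all full-length orbits, a random \emph{binary} $n_2 \times k$ array that, after embedding $\{0,1\} \subset \Sigma$ and $\Gamma$-developing, covers all two-symbol orbits, and $v$ constant rows for the constant orbit. These three ingredients align with the structure of the bound: the full orbits contribute the first summand and the two-symbol orbits contribute the second.

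For any $\tau \in {[k] \choose t}$, the action of $\Gamma$ partitions the $v^t$ interactions on $\tau$ into the constant orbit (length $v$), $2^{t-1}-1$ two-symbol orbits of length $v(v-1)$ (each indexed by a complementary pair $\{\pi,\bar{\pi}\}$ of non-constant patterns in $\{0,1\}^t$), and $r = \frac{v^{t-1}-(v-1)(2^{t-1}-1)-1}{(v-1)(v-2)}$ full-length orbits of length $v(v-1)(v-2)$. For the first stage I would take an $n_1 \times k$ array with entries uniform on $\Sigma$. A single row covers a given full-length orbit with probability $(v-1)(v-2)/v^{t-1}$, so the event $B_\tau$ that some full-length orbit on $\tau$ is uncovered satisfies $\Pr[B_\tau] \le r(1-(v-1)(v-2)/v^{t-1})^{n_1}$, and $B_\tau$ and $B_{\tau'}$ are independent whenever $\tau \cap \tau' = \emptyset$, so $d \le t{k \choose t-1}$. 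Applying Lemma \ref{lllsym} and solving exactly as in Theorem \ref{thm:frobenius} yields $n_1 = \frac{(t-1)\log k}{\log(v^{t-1}/(v^{t-1}-(v-1)(v-2)))}(1+\mathrm{o}(1))$; developing over $\Gamma$ contributes $v(v-1)(v-2)n_1$ rows, matching the first summand.

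For the two-symbol orbits, the key observation is that such an orbit is completely determined by the unordered pair $\{\pi,\bar{\pi}\}$ with $\pi \in \{0,1\}^t$ non-constant, and is hit by any row whose restriction to $\tau$ is a $\Gamma$-translate of $\pi$ (equivalently of $\bar{\pi}$). I would therefore generate a second random array of size $n_2 \times k$ with entries uniform on $\{0,1\}$; a single row matches one of $\pi,\bar{\pi}$ on $\tau$ with probability $2/2^t = 1/2^{t-1}$, so the event $E_\tau$ that some two-symbol class on $\tau$ is missed satisfies $\Pr[E_\tau] \le (2^{t-1}-1)(1-1/2^{t-1})^{n_2}$ with the same dependence $d \le t{k \choose t-1}$. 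Lemma \ref{lllsym} then gives $n_2 = \frac{(t-1)\log k}{\log(2^{t-1}/(2^{t-1}-1))}(1+\mathrm{o}(1))$. Embedding $\{0,1\} \subset \Sigma$ and developing each binary row over $\Gamma$ produces $v(v-1)$ distinct $\Gamma$-translates, since the stabilizer of a non-constant binary row is trivial by sharp $2$-transitivity; this contributes $v(v-1)n_2$ rows, matching the second summand. Finally I would adjoin $v$ constant rows to cover the constant orbit.

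Putting the three pieces together yields a covering array of size $N = v(v-1)(v-2)n_1 + v(v-1)n_2 + v$, and dividing by $\log k$ recovers the claimed bound on $d(t,v)$. The main obstacle is the bookkeeping in the second stage: one must verify that the $\Gamma$-translates of the binary array genuinely cover every two-symbol orbit. Sharp $2$-transitivity of $\Gamma$ guarantees that for every ordered pair $(a,b)$ of distinct symbols there is a unique $\sigma \in \Gamma$ sending $(0,1)$ to $(a,b)$, so the $\Gamma$-orbit of a binary pattern $\pi$ consists exactly of the $v(v-1)$ patterns obtained by substituting ordered pairs $(a,b)$ for $(0,1)$ in $\pi$; this coincides with the two-symbol orbit containing $\pi$ and $\bar{\pi}$, which is precisely what is required for the construction to cover all two-symbol orbits on every $\tau$. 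Once this combinatorial identification is in place, the two LLL calculations combine directly to the stated asymptotic.
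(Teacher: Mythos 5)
Your proposal is correct and follows the paper's argument in all essentials: the same decomposition of the $v^t$ interactions on a $t$-set into the constant orbit, the $2^{t-1}-1$ two-symbol orbits of length $v(v-1)$, and the $r$ full-length orbits, with the full-length orbits handled exactly as in the paper by Lemma \ref{lllsym} and development over the sharply $3$-transitive group. The only point of divergence is the treatment of the two-symbol orbits: the paper simply adjoins ${v \choose 2}$ binary covering arrays, one per unordered pair of symbols, and bounds each by the $v=2$ case of Theorem \ref{thm:Improvement}, whereas you build a single binary orbit-covering array by a direct LLL computation and develop it over the PGL group; since Theorem \ref{thm:Improvement} for $v=2$ is precisely that LLL computation followed by development over $C_2$, and ${v \choose 2}\cdot 2 = v(v-1)$, the two routes produce the identical second summand, with yours requiring the extra (correctly supplied) check that the PGL-translates of a binary pattern $\pi$ exhaust the two-symbol orbit $\{\pi,\bar{\pi}\}$ generates. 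One small slip in your justification: the stabilizer of a non-constant binary row in $\Gamma$ is not trivial but is the pointwise stabilizer of $\{0,1\}$, of order $v-2$ by sharp $3$-transitivity; the orbit length is $|\Gamma|/(v-2)=v(v-1)$, which is the count you actually use, so the bound is unaffected.
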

\begin{proof}
Let $\Gamma$ be the PGL group acting on $v$ symbols.

\emph{Covering orbits of length $v(v-1)(v-2)$:} The probability that
at least one orbit of length $v(v-1)(v-2)$ is not covered in an array with $n$ rows is $p\le r   \left(1-\frac{(v-1)(v-2)}{v^{t-1}}\right)^{n}$.
As before, $d<t{k \choose t-1}$. Using the  Lovász local lemma, if $ep(d+1)\le1$ there
is an array with $n$ rows that covers all orbits of length $v(v-1)(v-2)$.
Developing over $\Gamma$ we obtain an array of size

\begin{align*}
    & v(v-1)(v-2) \frac{1+\log\left\{ t {k \choose t-1}\right\} +\log r}{\log\left\{ \frac{v^{t-1}}{v^{t-1}-(v-1)(v-2)}\right\} }\\
    = & v(v-1)(v-2) \frac{1+(t-1) \log k+\log t+\log r}{\log\left\{ \frac{v^{t-1}}{v^{t-1}-(v-1)(v-2)}\right\} }\\
    = & \frac{v(v-1)(v-2)(t-1)}{\log\left\{ \frac{v^{t-1}}{v^{t-1}-(v-1)(v-2)}\right\} }\log k \left\{ 1+o(1)\right\} 
\end{align*}

Using the Taylor series expansion:

\[
\frac{v(v-1)(v-2)(t-1)}{\log\left\{ \frac{v^{t-1}}{v^{t-1}-(v-1)(v-2)}\right\} }\approx\frac{v^{t}(t-1)}{1+\frac{(v-1)(v-2)}{2  v^{t-1}}}
\]

\emph{Covering orbits of length $v(v-1)$:} Use a binary covering array  on every pair of symbols, adding ${v \choose 2} \CAN(t,k,2)$ rows to cover all interactions in orbits of length $v(v-1)$.
Applying Theorem \ref{thm:Improvement} to bound $\CAN(t,k,2)$, in this way we add \[ {v \choose 2} \frac{2(t-1)}{\log\left(\frac{2^{t-1}}{2^{t-1}-1}\right)}\log k \left\{ 1+o(1)\right\} =\frac{v(v-1)(t-1)}{\log\left(\frac{2^{t-1}}{2^{t-1}-1}\right)}\log k \left\{ 1+o(1)\right\}\] rows.

So $d(t,v)\le\frac{v(v-1)(v-2)(t-1)}{\log\left\{ \frac{v^{t-1}}{v^{t-1}-(v-1)(v-2)}\right\} }+\frac{v(v-1)(t-1)}{\log\left(\frac{2^{t-1}}{2^{t-1}-1}\right)}$.
\end{proof}

In the bound of Theorem \ref{thm:pgl}, the first term dominates the second. 
However, only when $t\in\{3,4\}$ and $v$ is sufficiently large does Theorem \ref{thm:pgl} give a tighter bound on $d(t,v)$ than that given by Theorem \ref{thm:frobenius}. 
Moreover, Theorem \ref{thm:frobenius} gives a tighter bound in many situations; when $t=5$, it is tighter when $v\le29$, and for larger $t$ the values of $v$ for which  it is tighter extend further. 
Hence the natural avenue of generalization to larger groups does not appear to be fruitful.

So far in our discussion of group action we have emphasized only the search space reduction aspect. Now we mention a side benefit inherent to sharply transitive group actions that further validates their role. By using sharply transitive (or sharply $l$-transitive) group actions we can further reduce the dependence between different bad events. Concretely, consider the cyclic group used in Theorem \ref{thm:Improvement}. For any set of $t$ columns $\tau$, if we fix the symbols in a specific column $c$ and select symbols in the remaining $t-1$ columns independently and uniformly at random, the probability of a ``bad event'' (i.e. at least one orbit not being covered) remains unchanged. This suggests that all the ``bad events'' on the $t$-set of columns that share only the column $c$ with $\tau$ are mutually independent of the ``bad event'' on $\tau$. Therefore, we can set $ d \le t {k - 1 \choose t-1} - {k-t \choose t-1}$. Although this improved estimate does not change the asymptotic bound in Theorem \ref{thm:Improvement}, in some cases it reduces the actual number of rows required in practice \cite{SarkarColbourn2}. Similar reduction in dependence may be obtained when we apply the Lovász local lemma to cover the full length orbits under sharply $l$-transitive group actions in Theorem \ref{thm:frobenius} and Theorem \ref{thm:pgl}. 

Although the proofs of Theorems \ref{thm:Improvement} and \ref{thm:frobenius} are non-constructive, construction algorithms can be obtained realizing the same bounds.
In some remarkable work, Moser et al. \cite{moser09,moser10} provide a constructive version of the Lovász local lemma.
Applying the method of  \cite{moser10}  to  covering array construction  provides a randomized algorithm:
\begin{enumerate}
\item For group $\Gamma$ acting on symbols $\Sigma$, determine the smallest value of $n$ by applying Theorems \ref{thm:Improvement} or \ref{thm:frobenius}.
\item Construct an $n\times k$ array with each entry  chosen independently and uniformly at random from $\Sigma$. 
\item Check sequentially in some fixed order that each of the (full length) orbits is covered in the array. If all are covered, report success and stop.
\item For the first  orbit that is not covered, ``re-sample'' each  column in that orbit, by choosing new entries in the column independently and uniformly at random from $\Sigma$. 
\item Restart the check from the beginning. 
\end{enumerate}
The expected number of column re-samplings is polynomially bounded \cite{moser10}.

The storage requirements are quite modest; in order to determine whether resampling is necessary, one maintains a single list indexed by the orbits of $\Sigma^t$.  
A set of $t$ columns can be treated without regard to the coverage in other sets of $t$ columns. See Sarkar and Colbourn \cite{SarkarColbourn2} for a detailed exploration of this algorithm for the practical purpose of covering array construction.

\section{Partial array construction with the Lovász local lemma}\label{sec:Partial-array-LLL}

Can alteration  techniques such as those in Section \ref{sub:Partial-array} be applied in conjunction with the techniques in Section \ref{sec:Partial-dependence}?
More precisely, can we use the Lovász local lemma to obtain a suitable partial covering array that covers ``most'' interactions with fewer rows than a random array? 
We make some first steps in addressing this question. 

To provide a better appreciation of our strategy, we start with an alternative proof of Theorem \ref{thm:two-stage}.
Let $X$ be a subset of interactions, and let $x=\frac{|X|}{{k \choose t}v^{t}}$.
Using the  union bound, the number of rows in a random array that covers all  interactions in $X$ is expected to be $\frac{\log|X|}{\log\left(\frac{v^{t}}{v^{t}-1}\right)}=\frac{\log\left(x {k \choose t} v^{t}\right)}{\log\left(\frac{v^{t}}{v^{t}-1}\right)}$.
The expected number of uncovered interactions in a random array with
$n>\frac{\log|X|}{\log\left(\frac{v^{t}}{v^{t}-1}\right)}$ rows is
$R={k \choose t}v^{t}\left(1-\frac{1}{v^{t}}\right)^{n}<{k \choose t}v^{t}\left(1-\frac{1}{v^{t}}\right)^{\frac{\log|X|}{\log\left(\frac{v^{t}}{v^{t}-1}\right)}}=\frac{{k \choose t}v^{t}}{|X|}=\frac{1}{x}$.
So there is a partial covering array with $\frac{\log x+\log{k \choose t}+t\log v}{\log\left(\frac{v^{t}}{v^{t}-1}\right)}$ rows that covers all the interactions in $X$, and has at most $1/x$ uncovered interactions. 
Adding one row to cover each uncovered interaction we obtain a covering array with $\frac{1}{x}+\frac{\log x+\log{k \choose t}+t\log v}{\log\left(\frac{v^{t}}{v^{t}-1}\right)}$ rows. 
Applying elementary calculus, when $x=\log\left(\frac{v^{t}}{v^{t}-1}\right)$ the number of rows in the covering array is the minimum, $\frac{\log{k \choose t}+t\log v+\log\log\left(\frac{v^{t}}{v^{t}-1}\right)+1}{\log\left(\frac{v^{t}}{v^{t}-1}\right)}$. 
This is the same as the bound in Theorem \ref{thm:two-stage}. 
Applying the Taylor series expansion of $\log(1-x)$, it can be shown that $\log\left(\frac{v^{t}}{v^{t}-1}\right)\approx\frac{1}{v^{t}}$.
So in aiming to cover $|X|={k \choose t}v^{t}x={k \choose t}v^{t}\log\left(\frac{v^{t}}{v^{t}-1}\right)\approx{k \choose t}$ interactions with a random array, we  cover almost ${k \choose t}v^{t}-v^{t}$ interactions.

Now we consider a variation. 
Start with a target set of interactions $X$.
Cover all interactions in $X$ using an array $A$ with $n$ rows, produced by the randomized algorithm of \cite{moser10}. 
Then $A$ may also cover some  interactions not in $X$, but does not in general cover all interactions. 
To finish,  cover the interactions that still remain in the second stage. 

To analyze the effectiveness, we need an upper bound on the probability that an interaction (not in $X$) is not  covered given that all the interactions in $X$ have been covered. 
We describe how to estimate this probability.
Haeupler et al. \cite{haeupler10} introduce the  conditional Lovász local lemma distribution. 
Let $\mathscr{X}=\{X_{1},X_{2},\ldots,X_{n}\}$ be a set of $n$ independent random variables. 
Let $\mathscr{A}=\{A_{1},A_{2},\ldots,A_{m}\}$ be a set of $m$ events that are determined by the random variables in $\mathscr{X}$. 
Let $vbl(A_{i})\subseteq\mathscr{X}$ be the minimal set of random variables that determine the event $A_{i}$. 
Let $B\notin\mathscr{A}$ be another event determined by some subset of random variables in $\mathscr{X}$. 
For any event $A\in\mathscr{A}\cup\{B\}$, let $\Gamma(A)$ be the set of other events $A'$ in $\mathscr{A}$ such that $vbl(A)\cap vbl(A')\neq\emptyset$.
Let $x:\mathscr{A}\rightarrow[0,1)$ such that for all $A\in\mathscr{A}$, $\Pr[A]\le x(A)\prod_{A'\in\Gamma(A)}(1-x(A'))$. 
The conditional Lovász local lemma distribution is the probability distribution over the random variables in $\mathscr{X}$, given that all the events in $\mathscr{A}$ are avoided. 
The probability of the event $B$ is given by:

\[
\Pr[B|\wedge_{i=1}^{m}\bar{A_{i}}]\le\frac{\Pr[B]}{\prod_{A\in\Gamma(B)}(1-x(A))},
\]
\noindent
where $\Pr[B]$ is the unconditional probability of the event $B$ \cite{haeupler10}.
We exploit the conditional Lovász local lemma distribution to analyze our two stage strategy. 
Let $R$ be a set of ${k \choose t}$ interactions such that for every $t$-column combination $\tau$ there is an interaction in $R$ involving all the columns in $\tau$. 
Let $A_{i}$ be the event that the $i$th interaction in $R$ is not covered in an i.i.d. random $n\times k$ array. 
Each $A_{i}$ is dependent on $d\le{t \choose 1}{k \choose t-1}$ other such events. 
Let $p$ be the probability of the event $A_{i}$.
Then $p=\left(1-\frac{1}{v^{t}}\right)^{n}$. 
Following the proof of the symmetric version of Lovász local lemma,  set $x(A_{i})=\frac{1}{d+1}$, and note that $\left(1-\frac{1}{d+1}\right)^{d}>\frac{1}{e}$. 
If $ep(d+1)\le1$, then $p\le\frac{1}{e(d+1)}<\frac{1}{(d+1)}\left(1-\frac{1}{d+1}\right)^{d}$, i.e. there is an $n\times k$ array that covers all  interactions in $R$.
Hence for $n\ge\frac{\log\left\{ et{k \choose t-1}\right\} }{\log\left(\frac{v^{t}}{v^{t}-1}\right)}$, there exist $n\times k$ arrays that cover all  interactions in $R$.
Let $\iota$ be an interaction that is not in $R$. 
The event that $\iota$ is not covered is dependent on at most $d$ events $A_{i}$.
Under the conditional Lovász local lemma distribution, the probability that $\iota$ is not covered is $p'\le\frac{p}{\left(1-\frac{1}{d+1}\right)^{d}}<ep=e\left(1-\frac{1}{v^{t}}\right)^{n}$.
So the expected number of uncovered interactions in the array is at most 
\[
{k \choose t}(v^{t}-1)e\left(1-\frac{1}{v^{t}}\right)^{n}\le\frac{{k \choose t}(v^{t}-1)e}{e t {k \choose t-1}}\le\frac{\left(\frac{ke}{t}\right)^{t}(v^{t}-1)}{t \left(\frac{k}{t-1}\right)^{t-1}}=k \frac{e^{t}(v^{t}-1)}{t^{2}} \left(1-\frac{1}{t}\right)^{t-1}
\]

By finding an $n\times k$ array with at most $\lfloor k \frac{e^{t}(v^{t}-1)}{t^{2}} \left(1-\frac{1}{t}\right)^{t-1}\rfloor$ uncovered interactions and then adding one extra row for each uncovered interaction, we can construct a covering array with $N=\frac{\log\left\{ e t {k \choose t-1}\right\} }{\log\left(\frac{v^{t}}{v^{t}-1}\right)}+\lfloor k \frac{e^{t}(v^{t}-1)}{t^{2}} \left(1-\frac{1}{t}\right)^{t-1}\rfloor$ rows. 
Unfortunately, the bound on $N$ is linear in $k$, and so is ineffective when $k$ is large. 
However, as Figure \ref{fig:Comp-lll-2-stage} shows, before the linearity in $k$ dominates, this bound improves substantially on a direct application of the Lovász local lemma.  
Indeed the utility of the bound lies in its ability to address situations in which $k$ is of ``intermediate'' size.

To avoid the linearity in $k$, we can employ an improved second stage.
In practice, we could apply the density algorithm; to obtain a general bound we employ the discrete Stein-Lovász-Johnson bound.
Figure \ref{fig:Comp-lll-2-stage} compares the Lovász local lemma  bound, the simple two-stage bound,  and the density two-stage bound. 
The application of the density algorithm in the second stage reduces the number of required rows to logarithmic in $k$.

\begin{figure}
\begin{centering}
\includegraphics[scale=0.7]{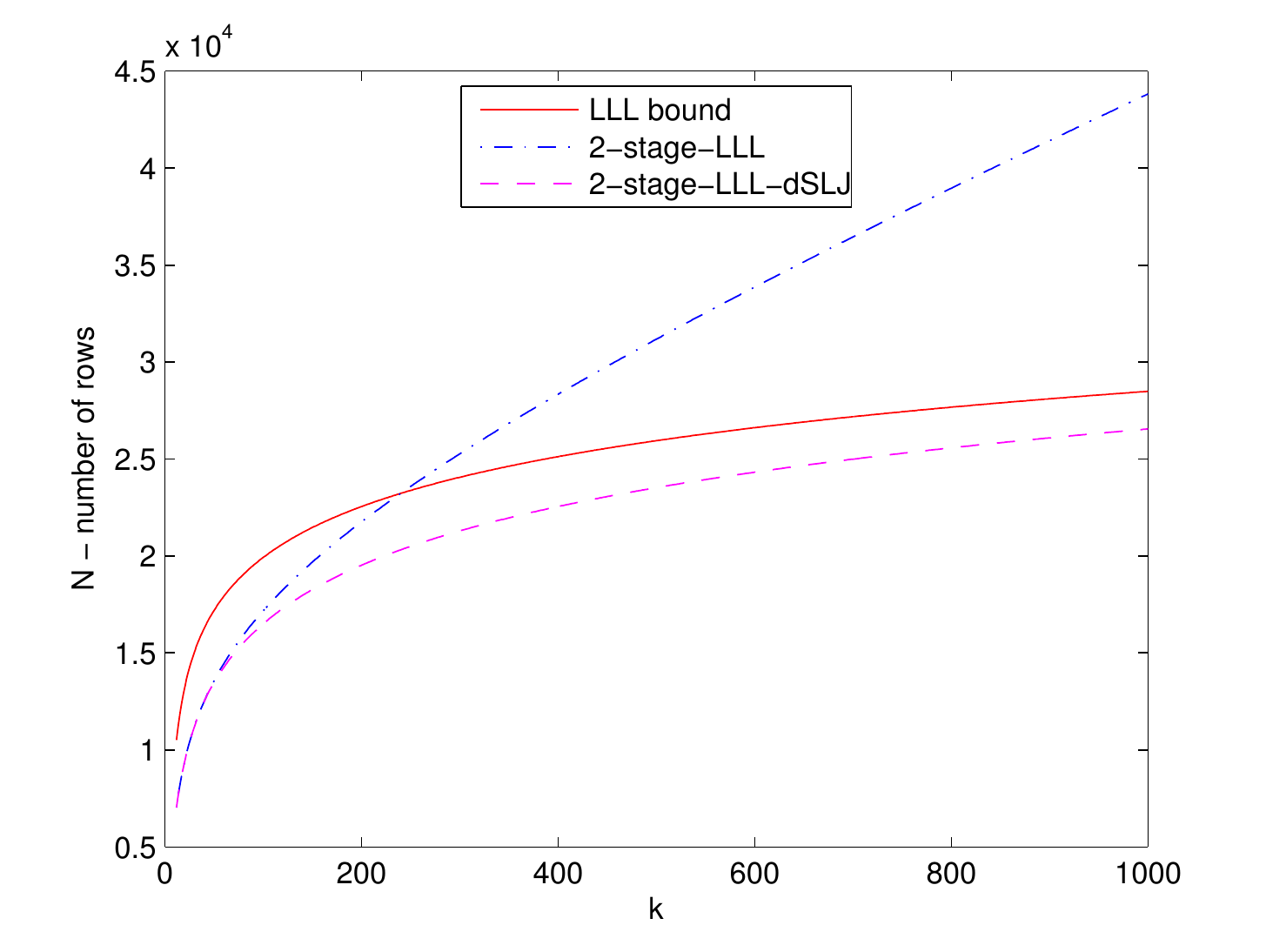}
\par\end{centering}

\caption{Comparison of covering array size bounds for the two-stage algorithm, the two-stage algorithm with density (discrete SLJ) in the second stage, and the Lovász local lemma for $t=6$ and $v=3$. 
Up to $k\sim200$, the two-stage algorithm outperforms the Lovász local lemma bound.
Application of the density algorithm in the second stage results in improvement  for a higher range of $k$ values. \label{fig:Comp-lll-2-stage}}
\end{figure}

\section{Conclusion}\label{sec:Conclusion}
The Stein-Lovász-Johnson and the Lovász local lemma methods for obtaining asymptotic bounds on $\CAN(t,k,v)$  also yield efficient construction techniques.  
Exploiting 2-transitive group actions, we have shown that the Lovász local lemma can be applied to obtain an upper bound on covering array numbers that improves upon all known bounds.  
In addition, by examining group action and by considering two-stage methods, we have developed upper bounds that are tighter when the number of factors is of intermediate size.
Each of the bounds obtained yields an efficient construction procedure and can be easily computed.  
Earlier density methods are in principle efficient, but suffer from  challenging storage requirements to maintain a list of $\binom{k}{t} v^t$ $t$-way interactions. 
The two-stage methods developed here obviate the need for such extensive tables, and hence provide construction algorithms of practical importance; see \cite{SarkarColbourn2}.

Our two-stage method based on the Lovász local lemma would be improved if a better upper bound than discrete-SLJ were known on the number of bad events when $n$ rows are selected.
This appears not to be straightforward, but is certainly of potential value.  
Another direction of interest is to explore different techniques for sample space reduction than the transitive group actions considered here; our results indicate that such sample space reduction can provide substantial improvements in the bounds on   $\CAN(t,k,v)$.

\section*{Acknowledgments}

The  research was supported in part by the National Science Foundation under Grant No. 1421058.

\bibliographystyle{abbrv}
\bibliography{CoveringArray}

\end{document}